\documentclass[reqno]{amsart}

\usepackage{accents}
\usepackage[export]{adjustbox}
\usepackage{amsmath,amssymb,amscd,amsthm,MnSymbol,stmaryrd}
\usepackage[english]{babel}
\usepackage{chngcntr}
\usepackage{comment}
\usepackage{graphicx}
\usepackage{hyperref}
\usepackage{indentfirst}
\usepackage[utf8]{inputenc}
\usepackage{mathtools}
\usepackage{mathrsfs}
\usepackage[square,sort,comma,numbers]{natbib}
\usepackage{stmaryrd}
\usepackage{subcaption}
\usepackage{tikz-cd}
\usepackage{url}
\usepackage{xcolor}
\usepackage{todonotes}


\newcommand{\si}{\sigma}
\newcommand{\Si}{\Sigma}


\newcommand{\bC}{\mathbb{C}}

\newcommand{\bF}{\mathbb{F}}
\newcommand{\bK}{\mathbb{K}}
\newcommand{\bL}{\mathbb{L}}
\newcommand{\bP}{\mathbb{P}}
\newcommand{\bQ}{\mathbb{Q}}
\newcommand{\bR}{\mathbb{R}}
\newcommand{\bT}{\mathbb{T}}
\newcommand{\bZ}{\mathbb{Z}}

\newcommand{\blambda}{{\boldsymbol{\lambda}}}
\newcommand{\bw}{{\boldsymbol{w}}}


\newcommand{\cA}{\mathcal{A}}

\newcommand{\cC}{\mathcal{C}}
\newcommand{\cD}{\mathcal{D}}

\newcommand{\cF}{\mathcal{F}}

\newcommand{\cI}{\mathcal{I}}
\newcommand{\cL}{\mathcal{L}}
\newcommand{\cM}{\mathcal{M}}
\newcommand{\cO}{\mathcal{O}}

\newcommand{\cR}{\mathcal{R}}
\newcommand{\tcR}{{\widetilde{{\cR}}}}

\newcommand{\cT}{\mathbb{T}}
\newcommand{\cU}{\mathcal{U}}
\newcommand{\cV}{\mathcal{V}}
\newcommand{\cX}{\mathcal{X}}
\newcommand{\cIX}{\mathcal{IX}}
\newcommand{\cY}{\mathcal{Y}}

\newcommand{\cMcc}{{\mathcal{M}^{\circ}}}


\newcommand{\fp}{\mathfrak{p}}

\newcommand{\fg}{\mathfrak{g}}


\newcommand{\ch}{\mathrm{ch}}

\newcommand{\Hom}{\mathrm{Hom}}
\newcommand{\Coh}{\mathrm{Coh}}

\newcommand{\age}{\mathrm{age}}
\newcommand{\eff}{{\mathrm{eff}}}

\newcommand{\SYZ}{{\mathrm{SYZ}}}
\newcommand{\orb}{\mathrm{CR}}
\newcommand{\orbifold}{\mathrm{orb}}

\newcommand{\pr}{\mathrm{pr}}
\newcommand{\inv}{\mathrm{inv}}

\newcommand{\Pic}{ {\mathrm{Pic}} }

\newcommand{\Ker}{\mathrm{Ker}}

\newcommand{\Sh}{\mathrm{Sh}}

\newcommand{\CC}{\mathrm{CC}}

\newcommand{\Nef}{ {\mathrm{Nef}} }
\newcommand{\NE}{ {\mathrm{NE}} }
\newcommand{\BoxS}{\mathrm{Box}(\mathbf \Si)}
\newcommand{\Boxs}{\mathrm{Box}(\si)}
\newcommand{\InvM}{\mathfrak{F}^{-1}}
\newcommand{\Kah}{{\text{K\"ahler}}}
\newcommand{\id}{{\mathrm{id}}}


\newcommand{\one}{\mathbf{1}}


\newcommand{\fP}{\mathfrak{P}}


\newcommand{\sw}{\mathsf{w}}
\newcommand{\uf}{\mathsf{f}}


\newcommand{\tC}{ {\widetilde{C}} }
\newcommand{\tbT}{ {\widetilde{\bT}} }
\newcommand{\tcD}{ {\widetilde{\mathcal{D}}}}
\newcommand{\tW}{{\widetilde{W}}}
\newcommand{\tb}{{\widetilde{b}}}

\newcommand{\tM}{{\widetilde{M}}}
\newcommand{\tN}{{\widetilde{N}}}
\newcommand{\tS}{\widetilde{S}}

\newcommand{\tV}{{\widetilde{V}}}

\newcommand{\tH}{\widetilde{H}}

\newcommand{\tcY}{\widetilde{\cY}}

\newcommand{\tGamma}{{\widetilde{\Gamma}}}

\newcommand{\tch}{ {\widetilde{\mathrm{ch}}}}

\newcommand{\tNef}{\widetilde{\Nef}}
\newcommand{\tNE}{\widetilde{\NE}}
\newcommand{\tcM}{\widetilde{\cM}}
\newcommand{\tcL}{{\widetilde{\cL}}}

\newcommand{\tcMcc}{{\tcM^{\circ}}}


\newcommand{\hGamma}{\hat{\Gamma}}


\newcommand{\TODO}{\textbf{TODO:}}

\def\res{\mathop{\mathrm{Res}}}

\newcommand{\lra}{\longrightarrow}
\newcommand{\pt}{\mathrm{point}}
\newcommand{\uw}{\mathsf{w}}

\newcommand{\tbCstar}{{\widetilde{\bC^*}}}

\renewcommand{\amalg}{\sqcup}

\renewcommand{\Im}{\mathrm{Im}}
\renewcommand{\Re}{\mathrm{Re}}


\newtheorem{dummy}{dummy}[section]

\theoremstyle{definition}
\newtheorem{definition}[dummy]{Definition}
\newtheorem{remark}[dummy]{Remark}
\newtheorem*{remark*}{Remark}
\newtheorem{example}[dummy]{Example}
\newtheorem{assumption}[dummy]{Assumption}
\newtheorem{theorem}{Theorem}[section]
\newtheorem*{theorem*}{Theorem}
\newtheorem*{Theorem*}{Theorem}
\newtheorem*{example*}{Example}

\newtheorem{lemma}[theorem]{Lemma}

\newtheorem{proposition}[theorem]{Proposition}

\theoremstyle{definition}

\numberwithin{equation}{section}
\counterwithin{figure}{section}

\DeclareMathOperator{\Arg}{Arg}
\DeclareMathOperator{\C}{\mathbb{C}}

\DeclareMathOperator{\conv}{Conv}

\DeclareMathOperator{\R}{\mathbb{R}}
\DeclareMathOperator{\realpart}{Re}
\DeclareMathOperator{\spec}{Spec}
\DeclareMathOperator{\Span}{span}

\numberwithin{equation}{section}
\counterwithin{figure}{section}

\title[Mirror symmetric Gamma via Fourier]{Mirror symmetric Gamma conjecture for toric GIT quotients via Fourier transform}

\begin{document}

\begin{abstract}
    Let $\cX=[(\mathbb C^r\setminus Z)/G]$ be a toric Fano orbifold. We compute the Fourier transform of the $G$-equivariant quantum cohomology central charge of any $G$-equivariant line bundle on $\bC^r$ with respect to certain choice of parameters. This gives the quantum cohomology central charge of the corresponding line bundle on $\cX$, while in the oscillatory integral expression it becomes the oscillatory integral in the mirror Landau-Ginzburg mirror of $\cX$. Moving these parameters to real numbers simultaneously deforms the integration cycle to the mirror Lagrangian cycle of that line bundle. This computation produces a new proof the mirror symmetric Gamma conjecture for $\cX$.
\end{abstract}

\author{Konstantin Aleshkin}
\address{Konstantin Aleshkin, Kavli Institute for the Physics and Mathematics of the Universe (WPI), The University of Tokyo Institutes for Advanced Study, The University of Tokyo, Kashiwa, Chiba 277-8583, Japan}
\email{konstantin.aleshkin@ipmu.jp}

\author{Bohan Fang}
\address{Bohan Fang, Beijing International Center for Mathematical
  Research, Peking University, 5 Yiheyuan Road, Beijing 100871, China}
\email{bohanfang@gmail.com}
\author{Junxiao Wang}
\address{Junxiao Wang, Beijing International Center for Mathematical Research, Peking University, 5 Yiheyuan Road, Beijing 100871, China}
\email{wangjunxiao@bicmr.pku.edu.cn}

\maketitle

\section{Introduction}

Mirror symmetry relates Gromov-Witten invariants to more classical objects such as period integrals. For an $n$-dimensional toric Fano orbifold $\cX$, its mirror is a Landau-Ginzburg model $W^\cX:(\bC^*)^{n}\to \bC$, where the superpotential $W^\cX$ is a Laurent polynomial. The following oscillatory integral gives the \emph{quantum cohomology central charge} \cite{Iritani_2009} of the a coherent sheaf $\cL$ on $\cX$
\begin{equation}
\label{eqn:main}
\int_F e^{-\frac{W}{z}} \Omega=z^n ( \tch(\cL) \tGamma(T\cX), I_\cX)_{\mathcal{H}}.
\end{equation}
Here $\Omega$ is the standard Calabi-Yau form on the algebraic torus $(\bC^*)^n$, and the right side is the pairing in the Givental symplectic space. The $I$-function $I^\cX$ is an explicit hypergeometric function (see Definition \ref{def:I-function}), which under mirror symmetry encodes all genus $0$ descendants Gromov-Witten invariants of $\cX$, i.e. equal to the J-function. The class $\tch(\cL)$ is a modified Chern character. The Gamma class $\tGamma(\cX)$ is a modified version of the square root of the Todd class of $\cX$.

This result was proved by Iritani \cite{Iritani_2009} and later in \cite{Fang2020} the cycle $F$ was explicitly shown to be the SYZ mirror dual to $\cL.$ The quantum cohomology central charge of a coherent sheaf $\cL$ is also defined by Gromov-Witten invariants
\[
( \tch(\cL) \tGamma(T\cX), I^\cX)_\mathcal{H} =\llangle\frac{\tch(\cL) \tGamma(T\cX)}{z(z+\psi)}\rrangle_{0,1}^\cX.
\]
 The symbol $\llangle \rrangle_{0,1}^{\cX}$ is the notation of genus-zero primary and descendant Gromov-Witten potential with $1$ marked point. A key feature here is that the Gamma class $\tGamma(T\cX)$ is needed to give correct integral structures mirror to the integral cycle $F$ \cite{Hosono_2006,Iritani_2009,KKP08}.

The toric orbifold $\cX$ is given as a GIT quotient $[\bC^r\sslash_\eta G]$ with respect to certain stability condition $\eta$ where $G\cong (\bC^*)^k\subset \tbT \cong(\bC^*)^r$. The affine space $\bC^r$ together with the $G$-action has a ``$G$-equivariant mirror'' given by a \emph{$G$-equivariantly perturbed superpotential} $W^{\bC^r}_G: (\widetilde{\bC^*})^r \to \bC$, explicitly given as the following
\[
W^{\bC^r}_G=X_1+\dots+X_r - w_1 \log X_1 - w_r \log X_r.
\]
Here $w_i=w_i(\lambda_1,\dots,\lambda_k)$ under the map $H^2_{\tbT}(\pt)\to H^2_{G}(\pt)$, where $w_i$ and $\lambda_i$ are bases.

In this paper, we directly compute the following oscillatory integral
\begin{equation}
\label{eqn:integral-introduction-Cr}
\int_{F} e^{-\frac{W^{\bC^r}_G}{z}} \Omega =\tch_G(\tcL) \tGamma_G(T\bC^r).
\end{equation}
Here $F=\bR^r+(2\pi\sqrt{-1})(c_1,\dots,c_r)$ is the mirror Lagrangian cycle in $(\widetilde{\bC^*})^r\cong \bC^r$ dual to $\tcL=\cO_\chi\in K_{\tbT}(\bC^r)$, i.e. the structure sheaf $\cO$ twisted by a character $\chi=(c_1,\dots,c_r)$ in $H^*_{\tbT}(\pt).$ 
This equality is just the $G$-equivariant version of Equation \eqref{eqn:main} for $\bC^r.$

We consider the inverse analytic Fourier transform (or a Mellin transform)

\[
\prod_{a=1}^k\int_{u_a \in 1+\sqrt{-1}\bR} du_ae^{-u_a t_a} \cdot 
\]
applied to both sides of \eqref{eqn:integral-introduction-Cr} where $u_a=\lambda_a/z$ and $\Im t_a$ can be picked in a certain interval
such that the transform makes sense. The left side gives an oscillatory integral in the universal cover of a subtorus $\{\tH=t\} \subset (\widetilde{\bC^*})^r$  over a cycle $F' \cap \{\Re \tH = \Re(t)\}$. The subspace $\{\tH=t\}$ is precisely the mirror Landau-Ginzburg model of the toric GIT quotient $\cX$. 
Cycle $F'$ is a small deformation of cycle $F$ that solves $\Im(\tH) = \Im(t)$.

On the other hand, the inverse Fourier transform of the right side just gives the quantum cohomology central charge of sheaf $\cL$ descending from $\tcL$ as 
is explained in~\cite{Aleshkin_Liu_23}.

One can deform the complex parameters $t_a$ to real numbers, and must simultaneously deform $F$ so that it still lives in $H_n((\tbCstar)^r, \Re W^\cX \gg 0)$. When all $t_a$ are real, the deformed $F$ is precisely the SYZ mirror skeleton in the sense of coherent-constructible correspondence \cite{F-L-T-Z_2011}. Thus, this computation gives a quick proof of the mirror symmetric Gamma conjecture Equation~\eqref{eqn:main} (Theorem~\ref{thm:main-1} and \ref{thm:main-2}), without resorting to partial differential equations (GKZ systems).

\begin{remark*}
The relationship between the equivariant quantum cohomology of $U$ and the quantum cohomology of $U\sslash G$ by Fourier transforming the equivariant parameters to K\"ahler parameters was conjectured by Teleman \cite{Teleman2014}. Iritani proves an $S^1$-version of this conjecture in the study of quantum cohomology for blow-ups \cite{iritani_quantumblowup_2023}. Essentially Proposition \ref{prop:fourier-a} is a toric version of this conjecture, implied by Givental and Lian-Liu-Yau's mirror theorems. The use of inverse Fourier transform was briefly discussed in \cite[Section 8]{iritani2023quantumdmodulestoricvarieties}.
\end{remark*}

\subsection{Outline}
We fix the notion of toric orbifolds in Section \ref{sec:toric}, and recall the definition of the Landau-Ginzburg mirror of a toric Fano orbifold in Section \ref{sec:lg-model}. In Section \ref{sec:fourier} we compute the oscillatory integrals using inverse Fourier transforms. In Section \ref{sec:local-sys} we study the behavior of the mirror integration cycle when complex parameters move to real numbers and compare it with the mirror SYZ cycle.

\subsection{Acknowledgement}
The second author would like to thank Chiu-Chu Melissa Liu, Song Yu, and Zhengyu Zong for valuable discussion. The third author wishes to thank Nai-Chung Conan Leung and Hiroshi Iritani for helpful discussion and comments. BF and JW are supported in part by National Key R\&D Program of China 2023YFA1009803 and NSFC 12125101.

\section{Toric notion}
\label{sec:toric}

In this section, we follow the definitions in \cite[Section 3.1]{Iritani_2009}, with slightly different notation. We work over $\bC$.

\subsection{Definition}
Let $N\cong \bZ^n$ be a finitely generated free abelian group, and let $N_\bR=N\otimes_\bZ\bR$. We consider complete toric \emph{orbifolds} which have trivial generic stabilizers. A toric orbifold is given by the following data:
\begin{itemize}
\item vectors $b_1,\dots, b_{r'} \in N$. We require the subgroup $\oplus_{i=1}^{r'} \bZ b_i$ is of finite index in $N$.
\item a complete simplicial fan $\Si$ in $N_\bR$ such that the set of $1$-cones is
$$
\{\rho_1,\dots,\rho_{r'}\},
$$
where $\rho_i=\bR_{\ge 0} b_i$, $i=1,\dots, r'$.
\end{itemize}
The datum $\mathbf \Si=(\Si, (b_1,\dots, b_{r'}))$ is the \emph{stacky
  fan} \cite{BCS_2005}. We choose
\emph{additional} vectors  $b_{r'+1},\dots, b_r$ such that $b_1,\dots, b_r$ generate $N$.
There is a surjective group homomorphism
\begin{eqnarray*}
\phi: & \tN :=\oplus_{i=1}^r \bZ\tb_i & \lra  N,\\
        &  \tb_i  & \mapsto b_i.
\end{eqnarray*}
Define $\bL :=\Ker(\phi) \cong \bZ^k$, where $k:=r-n$. Then
we have the following short exact sequence of finitely generated  free abelian groups:
\begin{equation}\label{eqn:NtN}
0\to \bL  \stackrel{\psi }{\lra} \tN  \stackrel{\phi}{\lra} N\to 0.
\end{equation}
Applying $ - \otimes_\bZ \bC^*$ and $\Hom(-,\bZ)$ to \eqref{eqn:NtN}, we obtain two exact
sequences of abelian groups:
\begin{align}
\label{eqn:bT}
&1 \to G \to \tbT\to \bT \to 1,\\
&\label{eqn:MtM}
0 \to M \stackrel{\phi^\vee}{\to} \tM \stackrel{\psi^\vee}{\to} \bL^\vee \to 0,
\end{align}
where
\begin{gather*}
\bT= N\otimes_\bZ \bC^*  \cong (\bC^*)^n,\  \tbT = \tN\otimes_\bZ \bC^* \cong (\bC^*)^r,\  G = \bL\otimes_\bZ \bC^* \cong (\bC^*)^k,\\
M = \Hom(N,\bZ)  = \Hom(\bT,\bC^*), \
\tM = \Hom(\tN,\bZ)= \Hom(\tbT,\bC^*),\\
\bL^\vee = \Hom(\bL,\bZ) =\Hom(G,\bC^*).
\end{gather*}

The action of $\tbT$ on itself extends to a $\tbT$-action on $\bC^r = \spec\bC[Z_1,\dots, Z_r]$.
The group $G$ acts on $\bC^r$ via the group homomorphism $G\to \tbT$ in \eqref{eqn:bT}.

Define the set of ``anti-cones''
$$
\cA=\{I\subset \{1,\dots, r\}: \text{$\sum_{i\notin I} \bR_{\ge 0} b_i$ is a cone of $\Si$}\}.
$$
Given $I\in \cA$, let $\bC^I$ be the subvariety of $\bC^r$ defined by the ideal in $\bC[Z_1,\ldots, Z_r]$ generated by $\{ Z_i \mid i\notin I\}.$
Define the toric orbifold $\cX$ as the stack quotient
$$
\cX:=[U_\cA / G],
$$
where
$$
U_\cA:=\bC^r \backslash \bigcup_{I\notin \mathcal A} \bC^I.
$$
$\cX$ contains the torus $\cT:= \tbT/G$ as a dense open subset, and
the $\tbT$-action on $U_\cA$ descends to a $\cT$-action on $\cX$. Let $\{D_i^\bT\}$ be the basis of $\tM$ dual to $\{\tb_i\}$, and $D_i=\psi^\vee(D_i^\bT)$. 

Equivalently, one may start with a rank $k$ algebraic torus $G$, with a choice of $D_1,\dots, D_r\in \bL^\vee$ where $\bL=\Hom(\bC^*,G)$ such that $\sum_{i=1}^r \bR D_i= \bL^\vee \otimes \bR$, and a stability vector $\eta \in \bL^\vee\otimes \bR$. The set of anticones is given by $\cA = \cA_{\eta}=\{I\subset [r]: \eta \in \sum_{i\in I} \bR_{>0} D_i \}$. 

The toric quotient $\cX$ is the stacky GIT quotient w.r.t. this stability vector, i.e., $\cX=[\bC^r \sslash_\eta G].$

\subsection{Line bundles and divisors on $\cX$}
\label{sec:line-bundles}

Let $\tcD_i$ be the $\tbT$-divisor in $\bC^r$ defined by $Z_i=0$. Then
$\tcD_i \cap U_\cA$ descends to a $\bT$-divisor $\cD_i$ in $\cX$. We have
$$
\tM \cong \Pic_{\tbT}(\bC^r) \cong H^2_{\tbT}(\bC^r;\bZ),
$$
where the second isomorphism is given by the $\tbT$-equivariant first Chern class $(c_1)_{\tbT}$.  Define\footnote{The minus sign in the definition of $\uw_i$ is a convention for the purpose of matching the B-model oscillatory integrals.}
\begin{align*}
\uw_i& = -(c_1)_{\tbT}(\cO_{\bC^r}(\tcD_i)) \in H^2_{\tbT}(\bC^r;\bZ) \cong H^2_{\cT}([\bC^r/G];\bZ)\cong\tM,\\
\bar D_i^{\bT}&=(c_1)_{\bT}(\cO_{\cX}(\cD_i)),\quad \bar D_i^{\tbT}=(c_1)_{\tbT}(\cO_{\cX}(\cD_i)).
\end{align*}
Then $\{-\uw_1,\ldots,-\uw_r\}$ is a $\bZ$-basis of
$H^2_{\tbT}(\bC^r;\bZ)\cong \tM$ dual to the $\bZ$-basis
$\{ \tb_1,\ldots, \tb_r\}$ of $\tN$. Notice that $
\bar{D}_i^\cT =0$ for $i=r'+1,\dots,r$. We have
\begin{gather*}
\Pic_{\cT}(\cX)\cong H^2_{\cT}(\cX;\bZ) \cong \tM/\oplus_{i=r'+1}^r \bZ \sw_i,\\
H^2_{\cT}(\cX;\bZ) = \bigoplus_{i=1}^{r'} \bZ \bar{D}^\cT_i \cong\bZ^{r'}.
\end{gather*}
We also have group isomorphisms
$$
\bL^\vee \cong \Pic_G(\bC^r) \cong H^2_G(\bC^r;\bZ),
$$
where the second isomorphism is given by the $G$-equivariant first Chern class $(c_1)_G$. Notice that a character $\chi \in\bL^\vee$ defines a line bundle on $\cX$
$$
\cL_\chi=\{(z,t)\in \cU_A\times \bC/ (z,t)\sim (g\cdot z,\chi(g)\cdot t),g\in G\},
$$
where $G$ acts on $\cU_A$ as a subgroup of $\tbT$. There is a canonical $\tbT$-action on $\cL_\chi$ which acts diagonally on $\bC^r$ and trivially on the fiber. Define
\begin{gather*}
D_i=(c_1)_G(\cO_{\bC^r}(\tcD_i)) \in H^2_G(\bC^r;\bZ)\cong \bL^\vee,\\
\bar{D}_i=c_1(\cO_{\cX}(\cD_i))\in H^2(\cX;\bZ).
\end{gather*}
We have
$$
\Pic(\cX)\cong H^2(\cX;\bZ) \cong \bL^\vee/\oplus_{i=r'+1}^r \bZ D_i.
$$
The map
$$
\bar{\psi}^\vee: \Pic_{\cT}(\cX)\cong H^2_{\cT}(\cX;\bZ) \to \Pic(\cX)\cong H^2(\cX;\bZ)
$$
is induced from $\psi^\vee:\Pic_\tbT(\bC^r;\bZ)\cong \tM \to \Pic_G(\bC^r;\bZ)\cong \bL^\vee$ and satisfies
$$
\bar \psi^\vee(\bar{D}_i^\cT)=\bar{D}_i\quad i=1,\ldots, r'.
$$

\subsection{Torus invariant subvarieties and their generic stabilizers}
Let $\Si(d)$ be the set of $d$-dimensional cones. For each $\si\in \Si(d)$,
define
$$
I_\si=\{ i\in \{1,\ldots,r\}\mid \rho_i\not\subset \si \} \in \cA,
\quad I_\si'= \{1,\ldots, r\}\setminus I_\si.
$$
Let $\tV(\si)\subset U_\cA$ be the closed subvariety defined by the ideal of $\bC[Z_1,\ldots, Z_r]$ generated by
$$
\{Z_i=0\mid \rho_i\subset \si\} = \{ Z_i=0\mid i \in I'_\si\}.
$$
Then $\cV(\si) := [\tV(\si)/G]$ is an $(n-d)$-dimensional $\cT$-invariant closed
subvariety of $\cX= [U_\cA/G]$.

Let
$$
G_\si:= \{ g\in G\mid g\cdot z = z \textup{ for all } z\in \tV(\si)\}. 
$$
Then $G_\si$ is the generic stabilizer of $\cV(\si)$. It is a finite subgroup of $G$.
If $\tau\subset \si$ then $I_\si\subset I_\tau$, so $G_\tau\subset G_\si$. There
are two special cases. If $\si\in \Si(n)$ where $n=\dim_\bC \cX$, then $\fp_\si:= \cV(\si)$ is a $\cT$ fixed point in $\cX$, and
$\fp_\si=BG_\si$. If $\si=\rho_i \in \Si(1)$ then $\cV(\si)=\cD_i$.

\subsection{The extended nef cone and the extended Mori cone} \label{sec:nef-NE}
In this paragraph, $\bF=\bQ$, $\bR$, or $\bC$.
Given a finitely generated free abelian group $\Lambda\cong \bZ^m$, define
$\Lambda_\bF= \Lambda\otimes_\bZ \bF \cong \bF^m$.
We have the following short exact sequences of vector spaces ($\otimes_\bZ
\bF$ with Equation \eqref{eqn:NtN} and \eqref{eqn:MtM}):
\begin{eqnarray*}
&& 0\to \bL_\bF\to \tN_\bF \to N_\bF \to 0,\\
&& 0\to  M_\bF\to \tM_\bF\to \bL^\vee_\bF\to 0.
\end{eqnarray*}

Given a maximal cone $\si\in \Si(n)$, we define
$$
\bK_\si^\vee := \bigoplus_{i\in I_\si }\bZ D_i.
$$
Then $\bK_\si^\vee$ is a sublattice of $\bL^\vee$ of finite index.
We define the {\em extended nef cone} $\tNef_\cX$ as below
$$
\tNef_\si = \sum_{i\in I_\si}\bR_{\geq 0} D_i,\quad \tNef_{\cX}:=\bigcap_{\si\in \Si(n)} \tNef_\si.
$$

The {\em extended $\si$-K\"{a}hler cone} $\tC_\si$ is defined to be the interior of
$\tNef_\si$; the {\em extended K\"{a}hler cone} of $\cX$, $\tC_{\cX}$, is defined
to be the interior of the extended nef cone $\tNef_{\cX}$.

Let $\bK_\si $ be the dual lattice of $\bK_\si^\vee$; it can be viewed as an additive subgroup of $\bL_\bQ$:
$$
\bK_\si =\{ \beta\in \bL_\bQ \mid \langle D, \beta\rangle \in \bZ \  \forall D\in \bK_\si^\vee \},
$$
where $\langle-, -\rangle$ is the natural pairing between
$\bL^\vee_\bQ$ and $\bL_\bQ$. We have $\bK_\si/\bL\cong G_\si$. Define
$$
\bK:= \bigcup_{\si\in \Si(n)} \bK_\si.
$$
Then $\bK$ is a subset of $\bL_\bQ$, and $\bL\subset \bK$.

We define the {\em extended Mori cone} $\tNE_\cX\subset \bL_\bR$ to be
$$
\tNE_{\cX}:= \bigcup_{\si\in \Si(n)} \tNE_\si,\quad  \tNE_\si=\{ \beta \in \bL_\bR\mid \langle D,\beta\rangle \geq 0 \ \forall D\in \tNef_\si\}.
$$
Finally, we define extended curve classes
$$
\bK_{\eff,\si}:= \bK_\si\cap \tNE_\si,\quad \bK_{\eff}:=
\bK\cap \tNE(\cX)= \bigcup_{\si\in \Si(n)} \bK_{\eff,\si}.
$$

From now on, we make the following Fano assumption. Then the equivariant mirror theorem in \cite{C-C-I-T_2015_mirror_toric_stack,C-C-K_2015} has an explicit mirror map.

\begin{assumption}[positive (Fano) condition] \label{assump:positive}
From now on, we assume that (1) the coarse moduli toric variety is Fano and (2) we may choose $b_{r'+1},\ldots, b_r$ such that $\hat \rho:=D_1+\dots + D_r$ is contained in the closure of the extended K\"ahler cone $\tC_\cX$.
\end{assumption}

\subsection{The inertia stack and the Chen-Ruan cohomology} \label{sec:CR}
Given $\si\in \Si$, define
$$
\Boxs:=\{ v\in N: {v}=\sum_{i\in I'_\si} c_i {b}_i, \quad 0\leq c_i <1\}.
$$
If $\tau\subset \sigma$ then $I'_\tau\subset I'_\si$, so $\mathrm{Box}(\tau)\subset \Boxs$.


Given a real number $x$, let $\lfloor x \rfloor$ be the greatest integer less than or equal to  $x$,
$\lceil x \rceil$ be the least integer greater than or equal to $x$,
and then $\{ x\} = x-\lfloor x \rfloor$ is the fractional part of $x$.
Define $v: \bK_\si\to N$ by
\begin{equation}
  \label{eqn:v}
v(\beta)= \sum_{i=1}^r \lceil \langle D_i,\beta\rangle\rceil b_i=\sum_{i\in I'_\si} \{ -\langle D_i,\beta\rangle \}b_i,
\end{equation}
so $v(\beta)\in \Boxs$. Indeed, $v$ induces a bijection $\bK_\si/\bL=G_\si\cong \Boxs$. 

For any $\tau\in \Si$ there exists $\si\in \Si(n)$ such that
$\tau\subset \si$. The bijection $G_\si \to \Boxs$ restricts
to a bijection $G_\tau\to \mathrm{Box}(\tau)$. Define
$$
\BoxS:=\bigcup_{\si\in \Si}\Boxs =\bigcup_{\si\in\Si(n)}\Boxs.
$$
There is a bijection $\bK/\bL\to \BoxS$. 

Given $v\in \Boxs$, where $\si\in \Si(d)$, define $c_i(v)\in [0,1)\cap \bQ$ by
$$
v= \sum_{i\in I'_\si} c_i(v) b_i.
$$
Suppose that  $k \in G_\si$  corresponds to $v\in \Boxs$ under the bijection $G_\si\cong\Boxs$, then we have
$$
\chi^{-\sw_i}(k) = \begin{cases}
1, & i\in I_\si,\\
e^{2\pi\sqrt{-1} c_i(v)},& i \in I'_\si.
\end{cases}
$$
Define
$$
\age(k)=\age(v)= \sum_{i\notin I_\si} c_i(v).
$$
Here $-\sw_i$ are basis in $\tM=\Hom(\tbT,\bC^*)$ dual to $\tb_1,\dots,\tb_r$ (see Section \ref{sec:line-bundles}). Here $-\sw_i$ are elements in the character lattice of $\tbT$, and $\chi^{-\sw_i}(k)$ are their values for some $k\in \tbT$.

Let $IU=\{(z,k)\in U_\cA\times G\mid k\cdot z = z\}$,
and let $G$ acts on $IU$ by $h\cdot(z,k)= (h\cdot z,k)$. The
inertia stack $\cI\cX$ of $\cX$ is defined to be the quotient stack
$$
\cI\cX:= [IU/G].
$$
The inertial stack $\cIX$ comes with a projection map $\pr:\cIX\to \cX$. Note that $(z=(Z_1,\ldots,Z_r), k)\in IU$ if and only if
$$
k\in \bigcup_{\si\in \Si}G_\si \textup{ and }  Z_i=0 \textup{ whenever } \chi_i(k) \neq 1.
$$
So
$$
IU=\bigcup_{v\in \BoxS} U_v,
$$
where
$$
U_v:= \{(Z_1,\ldots, Z_r)\in U_\cA: Z_i=0 \textup{ if } c_i(v) \neq 0\}.
$$
The connected components of $\cI\cX$ are
$$
\{ \cX_v:= [U_v/G] : v\in \BoxS\}.
$$

We introduce the notation for the natural inclusion map $\iota_v \; : \; \cX_v \hookrightarrow \cX$. The involution $IU\to IU$, $(z,k)\mapsto (z,k^{-1})$ induces involutions
$\inv:\cI\cX\to \cI\cX$ and $\inv:\BoxS\to \BoxS$ such that
$\inv(\cX_v)=\cX_{\inv(v)}$. Define the $\bT$-fixed point $\fp_{\si,v}=\pr^{-1}(\fp_\si)\cap \cX_v$.

In the remainder of this subsection, we consider rational cohomology, and
write $H^*(-)$ instead of $H^*(-;\bQ)$. The Chen-Ruan orbifold cohomology, as a vector space, is defined to be \cite{Zaslow_1993, Chen-Ruan_2004}
$$
H^*_\orb (\cX)=\bigoplus_{v\in \BoxS}  H^*(\cX_v)[2\age(v)].
$$
Denote $\mathbf 1_v$ to be the unit in $H^*(\cX_v)$. Then $\mathbf 1_v\in H^{2\age(v)}_\orb(\cX)$. In particular, we have 
$$
H^0_\orb(\cX) =  \bQ \mathbf 1_0.$$
We have the following isomorphisms given by first Chern classes:
\begin{gather*}
c_1: \Pic(\cX) \xrightarrow{\sim} H^2(\cX;\bZ),\\
c_1^\bT: \Pic_\bT(\cX) \xrightarrow{\sim} H^2_\bT(\cX;\bZ).
\end{gather*}

By Assumption \ref{assump:positive}, since $\cX$ is proper, the orbifold Poincar\'e pairing on $H^*_\orb(\cX)$ is defined as
\begin{equation}\label{eqn:Poincare}
(\alpha,\beta):=\int_{\cI\cX} \alpha\cup \inv^*(\beta),
\end{equation}
We also have an equivariant pairing on $H^*_{\orb,\bT}(\cX)$:
\begin{equation}\label{eqn:T-Poincare}
(\alpha,\beta)_{\bT} := \int_{\cI\cX_{\bT}} \alpha\cup \inv^*(\beta),
\end{equation}
where
$$
\int_{ \cI\cX_{\bT}}: H_{\orb,\bT}^*(\cX) \to H_{\bT}^*(\pt) = H^*(B\bT)
$$
is the equivariant pushforward to a point. Here the dot (sometimes omitted) is the cup product in $H^*(\cIX)$. The product in the Chen-Ruan cohomology \cite{Chen-Ruan_2004} (also see \cite{Zaslow_1993} for many cases) does not explicitly appear in this paper except for the following one particular case. Namely, let $\alpha \in H^*(\cX) \subset H^*_{\orb}(\cX)$ and $\beta \in H^*(\cX_v) \subset H^*_{\orb}(\cX)$. Then the Chen-Ruan product
of $\alpha$ and $\beta$ can be described as $\alpha \cdot \beta = \iota^*_v(\alpha) \cup \beta$.

Consider Givental symplectic space $\mathcal{H} := H^*_{\orb}(\cX) \otimes_{\bC} \cO(\bC^*)$, where the second factor denotes holomorphic functions in the
loop variable $z$. We denote an element of $\mathcal{H}$ by $\alpha(z)$. The Poincar\'{e} pairings extend $z$-linearly to the symplectic space.
The latter is endowed with the skew-symmetric pairing:
\[
    (\alpha(z), \beta(z))_{\mathcal{H}} := (\alpha(z), \beta(-z)),
\]
and symplectic form $\Omega(\alpha(z), \beta(z)) := \res_{z=0}(\alpha(z), \beta(z))_{\mathcal{H}}d z$.

One can also consider the $\tbT$-equivariant cohomology. The notion $(,)_\tbT$ and $\int_{\cI\cX_\tbT}$ are self-evident.

\subsection{Chern characters, Gamma classes and central charges}
In this subsection $X$ is a general smooth DM stack with a linear action of a complex reductive group $G$.

\subsubsection{Orbifold Chern characters and twisted Chern characters} Let $K_G(X)$ be the Grothendieck group of 
(topological) vector bundles on $X$. We consider orbifold equivariant Chern character
\[
    \ch_{G} \; : \; K_G(X) \to H^*_{\orb, G}(X)\hat{},
\]
where $H^*_{\orb, G}(X)\hat{}$ is the completion of $H^*_{\orb, G}(X)\hat{}$ by the maximal homogeneous ideal. By the splitting principle,
it is enough to define the Chern character for K-classes of equivariant line bundles $\cL$. Let $\cI X = \bigsqcup_{v \in \pi_0(\cI X)}  X_v$ and $\iota_v:X_v
{\hookrightarrow} X$ be the inclusion map.
\begin{equation}
    \ch_{G}(\cL) = \sum_{v \in \pi_0(\cI X)} e^{2\pi \sqrt{-1} \age_v(\cL)}e^{c_1^{G}(\iota^*_v\cL)} \one_v,
\end{equation}
where $\age_v(\cL)$ is a unique rational number in $[0,1)$ such that the stabilizer of $X_v$ acts in each fiber of $\iota^*_v\cL$ by $e^{2\pi \sqrt{-1} \age_v(\cL)}$.

In this paper we define the twisted Chern character as follows.
\begin{definition}

\[
    \widetilde\ch_G(\cL) = \sum_{v \in \pi_0(\cI X)} (\widetilde\ch_G)_v(\cL) \one_v =  \sum_{v \in \pi_0(\cI X)} e^{2\pi \sqrt{-1} \age_v(\cL)}e^{{\color{black} -\frac{2\pi\sqrt{-1}}{z}c_1^{G}(\cL)}} \one_v.
\]
\end{definition}

\subsubsection{Gamma classes and central charges}

Following Iritani, we define the orbifold Gamma class as a multiplicative characteristic class
\[
    \hat\Gamma_G \; : \; K_G(X) \to H^*_{\orb, G}(X)\hat{},
\]
such that for a $G$-equivariant line bundle $\cL$ on $X$ we have:
\[
    \hat\Gamma_G(\cL) = \bigoplus_{v \in \pi_0(\cI X)} \hat\Gamma(1 - \age_v(\cL) + c_1^{G}(\cL)).
\]

Correspondingly, we define the $z$-twisted Gamma class by
\begin{equation}
    \widetilde{\Gamma}_G(\cL) = \bigoplus_{v \in \pi_0(\cI X)} z^{-\age_v(\cL)}z^{c_1^G(\cL)/z}\Gamma(1-\age_v(\cL)+c_1^G(\cL)/z).
\end{equation}
When $G$ is trivial (in the non-equivariant setting), we drop the letter $G$ and denote these classes as $\tch$, $\hGamma$ and $\tGamma$.

Let $I(z) \; : \; \bC^* \to H^*_{\orb, G}(X)$ be an analytic function of variable $z$ with values in the orbifold cohomology.
A central charge associated to $I(z)$ of $\cF \in K_G(X)$ is defined to be
\begin{equation}
    Z_{A}^{X, G}(\cF; I) := \frac{z^{\dim X}}{(2\pi\sqrt{-1})^{\dim X}}(\widetilde\Gamma_G(T X) \tch_G(\cF), I(z))_{\mathcal{H}}.
    \label{eqn:Z_A}
\end{equation}

\begin{remark*}
    In the case where $I(z) = L^*(\tau,-z) = J(\tau, z)$ is the Givental J-function in the notation of~\cite{Iritani_2009}, the central charge defined above
    corresponds to the central charge of~\cite{Iritani_2009}.
\end{remark*}

\section{Mirror Landau-Ginzburg model}
\label{sec:lg-model}

In this section, we define a Landau-Ginzburg model for the mirror of $\cX$. For any algebraic torus $H$, we denote $M^H=\Hom(H,\bC^*)$, so $\bL^\vee=M^G$, $M=M^\bT$ and $\tM=M^\tbT$. 

Apply the exact functor $\Hom(-,\bC^*)$ to the short exact sequence
\eqref{eqn:NtN} and we get
\[
1\to \Hom(N,\bC^*)=M_{\bC^*} \to (\bC^*)^r=\tM_{\bC^*} \stackrel{\mathfrak{p}}{\longrightarrow} \cM=\Hom(\bL,\bC^*)\to 1.
\]
We see that $\cY_q:=\mathfrak{p}^{-1}(q)\cong (\bC^*)^k$ is a subtorus in $\tM_{\bC^*}$. 
We find an integral basis $p_1,\dots,p_k \in \bL^\vee \cap \tNef_\cX$ such that $p_{k'+1},\dots,p_k\in \sum_{i=r'+1}^r \bR_{\geq 0} D_i$ where $k' = k-(r-r')$. As discussed in \cite[p1037]{Iritani_2009}, this choice is always possible. Define the \emph{charge vectors}
\[
l^{(a)}=(l_1^{(a)},\dots, l_r^{(a)})\in \bZ^r,\quad
\psi(e_a)=\sum_{i=1}^r l_i^{(a)} \tb_i.
\]
with $\{e_a\}$ being the basis of $\bL$ dual to $\{p_a\}$. So
\[
D_i=\psi^\vee(D_i^\bT)=\sum_{a=1}^k l_i^{(a)}p_a,\quad i=1,\dots,r.
\]
Therefore, $\cY_q$ could be written as
$$
\cY_q=\{(X_1,\dots,X_r)\in (\bC^*)^r|H_a:=\prod_{i=1}^r X_i^{l^{(a)}_i}=q_a,\ a=1,\dots,k\},
$$
where $q=(q_1,\dots, q_k)$ are coordinates on $\cM$, as \emph{complex parameters}. We have a flat family of algebraic tori $\cY\subset (\bC^*)^r \times \cM$. For any $\beta\in \bK$, denote $q^\beta=\prod_{a=1}^k q_a^{\langle \beta, p_a \rangle}$. This object may involve fractional powers since $\beta \in \bK$ in which $\bL$ is a sublattice.

Let $\tcM=M^G_\bC \cong \bC^k$ be the universal cover of $\cM$, and let $X_i=e^{x_i}$, $q_i=e^{t_i}$. Denote the family
\[
\tcY= \cY\times_{\tM_{\bC^*}\times \cM} (\tM_\bC\times \tcM),
\]
where $\cY\hookrightarrow \tM_{\bC^*} \times \cM$, and the map $ \tM_\bC \times  \tcM \to \tM_{\bC^*} \times \cM$ is given by $x_i \mapsto e^{x_i}, t_i \mapsto e^{t_i}.$ The fiber $\tcY_t$ is given by
\[
\tH_a:=\sum_i {l_i^{(a)} x_i}=t_a.
\]
There is a covering map $p: \tcY_t \to \cY_{e^t}$.

For any sub-torus $K$ of $\tbT$, a $K$-\emph{equivariant framing} is a choice of group morphism $\uf: M^K \to \bC$. Since the embedding $K\hookrightarrow \tbT$ induces a linear map $\tM_\bC \to M^K_\bC$, $\uf$ induces a linear map $\tM_\bC\to \bC$, and we still denote it by $\uf$.

\begin{definition}
The $H^2(\cX)$-deformed superpotential on $\cY_q$ is
$$
W^\cX(X)=\sum_{i=1}^r X_i.
$$
The $K$-equivariant superpotential on $\tcY$ is
\[
\tW^\cX_{K} = W(e^x) - \uf(x).
\]
\end{definition}

\begin{example}
    A $\tbT$-equivariant framing is given by $\uf(\bw_i)=w_i$, and the $\tbT$-equivariant superpotential is
    \[
        \tW^\cX_\tbT=X_1+\dots +X_r -w_1 x_1 -\dots -w_r x_r
    \]
    defined on $\tcY$.
\end{example}

\begin{example}
    Let $\cX=[\bC^r\sslash_\eta G]$ be a toric GIT quotient stack. Then 
    \begin{equation}
     \tW^{\bC^r}_G=X_1+\dots +X_r  -\sum_{a=1}^k \lambda_a \sum_{i=1}^r l_i^{(a)}x_i
     \label{eqn:WG}
    \end{equation}
    on $\tM_\bC$, while
    \[
    W^\cX=X_1+\dots +X_r
    \]
    on $\cY$.
\end{example}

Let 
\[
\Omega_{\tM_{\bC^*}}=\frac{dX_1}{X_1}\dots \frac{dX_r}{X_r}
\]
be the torus invariant Calabi-Yau form on $\tM_{\bC^*}$. Its pullback to $\tM_{\bC}$ is $\Omega_{\tM_\bC}=dx_1\dots dx_r$. 

We define~\footnote{One has to choose an orientation convention such as $\Omega_{\tM_{\bC^*}}=\Omega_{\cY_q}\wedge d\log H_1\wedge \ldots \wedge d\log H_k$, but
the precise choice will not play a role.} 
\begin{align*}
\Omega_{\cY_q}&= \frac{\Omega_{\tM_{\bC^*}}}{d\log H_1\wedge \ldots \wedge d\log H_k},\\
\Omega_{\tcY_t}&= \frac{\Omega_{\tM_{\bC}}}{d \tH_1 \wedge \ldots \wedge d\tH_k}.
\end{align*}
We have $p^* \Omega_{\cY_q}= \Omega_{\tcY_t}$.

For certain $t\in \tcM$, a $K$-equivariant framing and a locally complete submanifold $F\subset \tcY_t$, we define the B-model equivariant quantum cohomology central charge
\[
Z_B^{\cX,K}(F)=\int_F e^{-\frac{\tW^\cX_K}{z}} \Omega_{\tcY_t}.
\]
for $F$. We allow this $F$ to be a non-compact manifold as long as the integral converges absolutely for $\Re\lambda_a>0$ and $z>0$.

Similarly the B-model non-equivariant central charge is given by 
\begin{equation} \label{eq:bCentralCharge1}
    Z^{\cX}_B(F)=\int_F e^{-\frac{W^\cX}{z}}\Omega_{\cY_q}
\end{equation}
for given $q\in \cM$ and the locally complete submanifold $F\subset \cY_q$ if the integral converges absolutely for $z>0$.

\section{B-model oscillatory integral for toric GIT quotients}
\label{sec:fourier}

For a toric Fano orbifold $\cX$ defined in Section \ref{sec:toric}, it is given as a GIT quotient $\cX=[\bC^r\sslash_\eta G]$. The $G$-equivariant superpotential is given by Equation \eqref{eqn:WG}. We use a G-equivariant framing $\uf:M^G\to \bC$ such that $\Re(\lambda_a)=\Re(\uf(\blambda_a))>0$. We also require $z>0$ in our computation.

Consider a real $\tbT$-equivariant Picard element $\tcL=\cO(\sum_{i=1}^r c_i\tcD_i) \in \Pic_\tbT(\tM_\bC)_\bR \cong \tM_\bR$. Its mirror Lagrangian cycle is
\[
F_\tcL= \prod_{i=1}^r \left( \bR+2\pi\sqrt{-1} c_i \right).
\]
The B-model central charge is (noting the equivariant $I$-function is $\one$.)
\begin{align}
Z_B^{\tM_\bC,G}(F_\tcL)&=\int_{F_\tcL} e^{-W_G^{\bC^r}/z} \Omega_{\tM_\bC}, \nonumber \\
&=\prod_{i=1}^r e^{-\frac{2\pi\sqrt{-1}c_i \sum_{a=1}^k \lambda_a l_i^{(a)}}{z}}\int_{X_i>0}e^{-\frac{X_i}{z}} X_i^{\frac{\sum_{a=1}^k \lambda_a l_i^{(a)}}{z}-1}dX_i\nonumber \\
&= \prod_{i=1}^r e^{-\frac{2\pi\sqrt{-1}c_i \sum_{a=1}^k \lambda_a l_i^{(a)}}{z}} \cdot  \prod_{i=1}^r z^{\frac{\sum_{a=1}^k\lambda_a l_i^{(a)}}{z}}\Gamma\left(\frac{\sum_{a=1}^k \lambda_a l_i^{(a)}}{z}\right)\\
&= z^{r}\prod_{i=1}^r e^{-\frac{2\pi\sqrt{-1}c_i \sum_{a=1}^k \lambda_a l_i^{(a)}}{z}} \cdot  \prod_{i=1}^r z^{\frac{\sum_{a=1}^k\lambda_a l_i^{(a)}}{z}}\frac{\Gamma\left(1+\frac{\sum_{a=1}^k \lambda_a l_i^{(a)}}{z}\right)}{\sum_{a=1}^k \lambda_a l_i^{(a)}}\\
&= z^r\tch_G(\tcL) \frac{\widetilde\Gamma_G(T\bC^r)}{e_G(T\bC^r)}\Big\vert_{\blambda_a=\lambda_a}= (2\pi \sqrt{-1})^{r}Z_A^{\bC^r,G}(\tcL)\vert_{\blambda_a=\lambda_a}.\nonumber
\end{align}


Let $\cL$ be the image of $\tcL$ under the natural map $\mathrm{Pic}_{\bT}(\cX) \to \mathrm{Pic}(\cX)$, that is the non-equivariant
limit of $\tcL$. The central charge only depends on $\cL$. Indeed, if $\tcL$ is represented by $\sum_{i=1}^{r'} c_i \bar{D}^\bT_i \in \mathrm{Pic}_{\bT}(\cX)$, then $\cL$ is represented by
\begin{equation} \label{eq:ha}
    \bar{\psi}^{\vee}(\sum_{i=1}^{r'} c_i \bar{D}^\bT_i) = \sum_{i=1}^{r'} c_i \bar{D}_i = \sum_{i=1}^{k'} h_a \bar{p}_a \in \mathrm{Pic}(\cX),
\end{equation}
where $h_a = \sum_{i=1}^{r'}l_i^{(a)}c_i$. The central charge only depends on $\tcL$ via the combinations $h_a$.

\paragraph{\bf Fourier transform}

Let $f(u)$ be a function holomorphic in a strip $[\alpha,\beta]\times\sqrt{-1}\bR$. 

Its inverse Fourier transform is an analytic function defined by the following integral
\[
    \InvM_{u} (f)(t) = (2\pi\sqrt{-1})^{-1}\int_{\gamma+\sqrt{-1}\bR} f(u)e^{ut}du, \quad \gamma \in [\alpha,\beta],
\]
when it converges absolutely (we assume that it converges absolutely and uniformly for $\gamma \in [\alpha,\beta]$ such that the integral
does not depend on the particular choice.

More generally, let $f(u)$ be a holomorphic function defined in a domain $\prod_{a=1}^k U\times\sqrt{-1}\bR$.
We define its inverse Fourier transform as an iterated Fourier transform assuming absolute uniform convergence:
\[
    \InvM_u(f)(t) = (2\pi\sqrt{-1})^{-k}\int_{\gamma + \sqrt{-1}\bR^k} f(u) e^{\sum_{a=1}^k u_a t_a} \prod_{a=1}^k du_k, \quad \gamma \in U \subset \bR^k.
\]

We can compute the inverse Fourier transform of the central charge using the Fourier inversion theorem. 
Note, that $Z^{\widetilde{M}_{\bC},G}_B(\cF_{\widetilde{L}})$ is meromorphic in the domain $u \in (0,\infty)^{k}$. Moreover, using Stirling's approximation:
\[
    \Gamma(z) = O(e^{-\frac{\pi}2|\Im(z)|}), \quad |\Im(z)| \to \infty
\]
uniformly for any compact interval in $(0,\infty)$ we see that $Z^{\widetilde{M}_{\bC},G}_B(\cF_{\widetilde{L}})$ has uniformly
exponential decay for $\Re(u)$ in any compact subset of the positive orthant, more precisely, let $\lambda_a/z= u_a$. Then
\begin{equation} \label{eq:centralAsymptotics}
    Z^{\widetilde{M}_{\bC},G}_B(\cF_{\widetilde{L}}) = e^{-2\pi\sqrt{-1}\sum_{i=1}^rc_i\sum_{a=1}^ku_a l^{(a)}_i}O\left(\exp\left(-\frac{\pi}{2}\sum_{i=1}^r\left|\sum_{a=1}^k\mathrm{Im}(u_a) l^{(a)}_i\right|\right)\right), \quad |\mathrm{Im}(u)| \to \infty.
\end{equation}

Thus, we can compute its inverse Fourier transform with
respect to $u$. We do this by noticing that we can write the central charge itself as a Fourier transform in $\widetilde{H}_a = \sum_i l^{(a)}_i x_i$.

First, we can deform the contour $F_{\tcL}$. Let $c_i' \in \bR$ be a deformation of $c_i$ such that $|2\pi c'_i-2\pi c_i|< \pi/2$ or $|c'_i -c_i| <1/4$. Then,
we can replace $F_{\tcL}$ in the definition of $Z^{\widetilde{M}_\bC,G}_B(F_{\tcL})$ with 
\begin{equation} \label{eq:fDeformation}
    F_{\tcL'} = \prod_{i=1}^r \left(\bR+2\pi\sqrt{-1}c_i'\right),  
\end{equation}
homotopic to $F_{\tcL}$.

Using absolute convergence of the integral, we can rewrite the central charge as 
\begin{align} 
    &Z_B^{\tM_\bC,G}(F_{\tcL'}) = \int_{F_{\tcL'}} e^{-\frac{X_1+\dots+X_r}{z}} \exp(\sum_{i=1}^r l_i^{(a)} x_i u_a) \Omega_{\tM_\bC} \nonumber  \\
    = &\left(\prod_{a=1}^k\int_{ (\bR +2\pi\sqrt{-1}\sum_{i=1}^r l^{(a)}_i c'_i)} dt_a e^{t_a u_a}\right) \int_{F_{\widetilde{\cL}'} \cap \bigcap_{a=1}^k \{\Re\widetilde{H}_a = \Re (t_a) \}} e^{-\frac{X_1 + \cdots + X_r}{z}} \frac{\Omega_{\widetilde{M}_{\bC}}}{d\widetilde{H}_1 \wedge \cdots \wedge d\widetilde{H}_k} \nonumber \\ 
    = &\left(\prod_{a=1}^k\int_{ (\bR +2\pi\sqrt{-1}\sum_{i=1}^r l^{(a)}_i c_i')} dt_a e^{t_a u_a}\right) Z^{\cX}_B(F_{\widetilde{\cL}'}
    \cap \bigcap_{a=1}^k\{\Re\widetilde{H}_a=\Re(t_a)\}).
\end{align}
The first equality is the Fubini theorem, and the second one is by defining formula~\eqref{eq:bCentralCharge1} for B-model central charges on $\cX$.
Note, that imaginary part of $t$ is determined by the shift $\{c'_i\}_i$ in the contour $F_{\cL'}$.

The multiple application of the analytic Fourier inversion theorem~\ref{thm:fourierInversion} implies that 
\begin{equation} \label{eq:B-charge_mellin_transform}
    \InvM_u (Z_B^{\tM_\bC,G}(F_\tcL)) = Z^{\cX}_B(F_{\widetilde{\cL}'}
    \cap \bigcap_{a=1}^k\{\Re\widetilde{H}_a=\Re(t_a)\}),
\end{equation}
where the right-hand side is an absolutely convergent integral for 
\begin{equation} \label{eq:tStrip}
    \{\sum_{i}l_i^{(a)}(2\pi c_i -\pi/2) < \Im(t_a) = 2\pi \sum_i l_i^{(a)}  c'_i < \sum_i l_i^{(a)}(2\pi c_i + \pi/2)\}
\end{equation}
as follows from the asymptotics~\eqref{eq:centralAsymptotics}.

Thus, we have shown the following.

\begin{proposition}
\label{prop:fourier-b}
\[
   \InvM_{u}(Z_B^{\tM_\bC,G}(F_\tcL))= Z^{\cX}_B(F_{\tcL'}\cap  \bigcap_{a=1}^k \{\Re\tH_a=\Re(t_a)\}).
\]
\end{proposition}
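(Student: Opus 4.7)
The strategy is to organize the computation already performed in the paragraphs preceding the proposition into a clean application of the analytic Fourier inversion theorem. I would proceed in three steps.

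First, I would justify replacing $F_{\tcL}$ by the small deformation $F_{\tcL'}$ without changing the integral. Since the two cycles differ only by imaginary shifts $|c_i-c_i'|<1/4$ in each coordinate and the integrand $e^{-W^{\bC^r}_G/z}\Omega_{\tM_\bC}$ is holomorphic with uniform exponential decay in $|\Im X_i|$ (as quantified by Stirling and recorded in \eqref{eq:centralAsymptotics}), a standard contour-shift argument on each $X_i$-factor separately gives $Z^{\tM_\bC,G}_B(F_{\tcL})=Z^{\tM_\bC,G}_B(F_{\tcL'})$.

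Second, I would slice $F_{\tcL'}$ along the linear map $\tH=(\tH_1,\ldots,\tH_k)$. The fibers $F_{\tcL'}\cap \bigcap_a\{\Re\tH_a=\Re(t_a)\}$ are precisely the non-equivariant mirror cycles whose B-model central charge \eqref{eq:bCentralCharge1} appears on the right-hand side. Along each fiber the equivariant part of the exponent $\sum_{a,i} l^{(a)}_i x_i u_a$ collapses to $\sum_a u_a t_a$; applying Fubini (justified by absolute convergence from the Stirling bound), one obtains
\[
Z^{\tM_\bC,G}_B(F_{\tcL'}) = \prod_{a=1}^k \int_{\bR+2\pi\sqrt{-1}\sum_i l^{(a)}_i c_i'} e^{u_a t_a}\,dt_a \cdot Z^{\cX}_B\bigl(F_{\tcL'}\cap \bigcap_a\{\Re\tH_a=\Re(t_a)\}\bigr).
\]
This exhibits $Z^{\tM_\bC,G}_B(F_{\tcL})$ as the forward Fourier transform (in the $t$-variables) of the non-equivariant central charge.

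Third, I would apply the iterated analytic Fourier inversion theorem \ref{thm:fourierInversion} to invert this relation in $u=(u_1,\ldots,u_k)$. The decay bound \eqref{eq:centralAsymptotics} supplies the uniform exponential falloff in $|\Im u|$ needed for $\InvM_u$ to converge absolutely and be independent of the real part $\gamma$ in the allowed region, while the admissible range of $\Im(t_a)$ is precisely the strip \eqref{eq:tStrip}. The main obstacle I expect is Step~2: to apply Fubini one must produce an integrable majorant on all of $F_{\tcL'}$ uniform in the shifts $c_i'$, so that outer and inner integrals both converge absolutely and can be exchanged. Once this estimate is in place, the remainder of the argument is essentially bookkeeping within the Fourier inversion formalism.
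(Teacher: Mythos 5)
Your proposal matches the paper's proof essentially step for step: contour deformation $F_{\tcL}\rightsquigarrow F_{\tcL'}$, slicing along $\tH$ together with Fubini to exhibit the equivariant central charge as a forward Fourier transform of the non-equivariant one, and then an application of the analytic Fourier inversion theorem \ref{thm:fourierInversion}, with convergence and contour independence secured by the Stirling asymptotics \eqref{eq:centralAsymptotics} and the strip condition \eqref{eq:tStrip}. You also correctly identify the only nontrivial technical input as the absolute-convergence estimate needed for Fubini, which the paper likewise supplies from the same decay bound.
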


We also have an A-model counterpart of the last statement.
The non-equivariant small $I$-function of $\cX$ is~\cite{Iritani_2009}
\begin{equation}
\label{def:I-function}
    I^{\cX}(t,z) = e^{\sum_{a=1}^k t_a p_a/z}\sum_{\beta \in \bK_{\eff}} e^{\sum_{a=1}^k t_a \beta_a} \prod_{i=1}^r\frac{\prod_{\{c\} = \{D_i(\beta)\}, \; c \le 0} (\bar D_i+cz)}{\prod_{\{c\} = \{D_i(\beta)\}, \; c \le D_i(\beta)} (\bar D_i+cz)} \one_{v(\beta)} 
\end{equation}

The A-model central charge in this paper is the central charge associated
with the I-function for any line bundle $\cL\in \Pic(\cX)$ as given by Equation \eqref{eqn:Z_A}
\[
    Z_A^{\cX}(\cL) := Z_A^{\cX,G=\{1\}}(\cL;I^\cX).
\]

\begin{lemma}
    Explicitly, the A-model central charge is
    \begin{multline}
        Z_A^{\cX}(\cL) = (2\pi\sqrt{-1})^{-n}\sum_{v \in \BoxS}\sum_{\substack{\beta \in \bK_{\eff}\\D_i(\beta) = v_i}}z^{\dim \cX_v-k}\int_{\cX_v}
        e^{-\sum_a t_a(p_a/z-\beta_a)}z^{\sum_{i=1}^r\bar D_i/z-D_i(\beta)}
        \times \\ \times
        \prod_{i, v_i = 0}\bar D_i\prod_{i=1}^r\Gamma(\bar D_i/z-D_i(\beta)) \, {\tch}_v(\cL)
    \end{multline}
\end{lemma}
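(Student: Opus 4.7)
The plan is to unpack the defining formula \eqref{eqn:Z_A} for $Z_A^{\cX}(\cL) = Z_A^{\cX,\{1\}}(\cL; I^\cX)$ directly, evaluate the Givental symplectic pairing componentwise on the inertia stack, and simplify using the Gamma function functional equation. First, using $(\alpha(z),\beta(z))_{\mathcal{H}} = (\alpha(z),\beta(-z))$ together with \eqref{eqn:Poincare}, I would rewrite
\[
(\tGamma(T\cX)\,\tch(\cL), I^\cX(z))_{\mathcal{H}} = \int_{\cIX} \tGamma(T\cX)\,\tch(\cL) \cup \inv^* I^\cX(-z),
\]
and decompose the integral as a sum over the components $\cX_v$, $v \in \BoxS$, of $\cIX$. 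The involution $\inv$ sends $\cX_v$ to $\cX_{\inv(v)}$, so the contribution on $\cX_v$ collects the $\beta \in \bK_{\eff}$ with $v(\beta) = \inv(v)$; after reindexing, this becomes the sum over $\beta$ with $v(\beta) = v$, equivalently $\{-D_i(\beta)\} = v_i$ for all $i$.

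Next, I would restrict each of the three factors to a fixed $\cX_v$. The twisted Chern character gives $\tch_v(\cL) = e^{2\pi\sqrt{-1}\age_v(\cL)} e^{-2\pi\sqrt{-1}c_1(\cL)/z}$. For the twisted Gamma class of $T\cX$, I would apply the Euler sequence $0 \to \cO^{\oplus k} \to \bigoplus_i \cO(\bar D_i) \to T\cX \to 0$ and the multiplicativity of $\tGamma$ (noting $\tGamma(\cO)|_{\cX_v}=1$) to write
\[
\tGamma(T\cX)|_{\cX_v} = \prod_{i=1}^r z^{-\age_v(\bar D_i)}z^{\bar D_i/z}\,\Gamma\bigl(1-\age_v(\bar D_i)+\bar D_i/z\bigr),
\]
where $\age_v(\bar D_i) = \{D_i(\beta)\}$ when $v = v(\beta)$. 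The component of $I^\cX(-z)$ on $\cX_v$ contributes $e^{\sum_a t_a(\beta_a - p_a/z)}$ times the rational factor $\prod_i \frac{\prod_{\{c\}=\{D_i(\beta)\},\, c\le 0}(\bar D_i-cz)}{\prod_{\{c\}=\{D_i(\beta)\},\, c\le D_i(\beta)}(\bar D_i-cz)}$.

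Finally, I would combine all three pieces using the Gamma functional equation $\Gamma(x+1)=x\Gamma(x)$. The argument $1-\age_v(\bar D_i)+\bar D_i/z$ differs from $\bar D_i/z - D_i(\beta)$ by a non-negative integer, so iterating the functional equation absorbs the rational factor of $I^\cX(-z)$ into the Gamma class and produces $\Gamma(\bar D_i/z - D_i(\beta))$ up to signs and powers of $z$. When $v_i = 0$, i.e.\ $D_i(\beta)\in\bZ_{\ge 0}$, the target $\Gamma(\bar D_i/z - D_i(\beta))$ has a formal pole regularized by the $c=0$ term $\bar D_i$ in the denominator of $I^\cX(-z)$; collecting these factors across all such $i$ produces the prefactor $\prod_{i,\,v_i=0}\bar D_i$. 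Bookkeeping of the remaining $z$-powers---the overall $z^n=z^{\dim\cX}$ from \eqref{eqn:Z_A}, the $z^{-\age_v(\bar D_i)}$ and $z^{\bar D_i/z}$ from $\tGamma$, and the $z^{-D_i(\beta)}$ shifts from the functional equation---together with $\sum_i \age_v(\bar D_i) = \age(v)$ and $n - \#\{i:v_i\ne 0\} = \dim\cX_v$, yields the stated exponents $z^{\dim\cX_v-k}$ and $z^{\sum_i \bar D_i/z - D_i(\beta)}$. The main technical step is precisely this last combination: coupling the rational factor from $I^\cX(-z)$ to the Gamma class via the functional equation and producing $\prod_{i,v_i=0}\bar D_i$ by pole cancellation; the remaining steps are routine unpacking of definitions and bookkeeping.
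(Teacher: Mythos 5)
Your overall plan --- unpack the pairing in~\eqref{eqn:Z_A}, decompose over the components $\cX_v$ of $\cIX$, and absorb the I-function's rational factors into the Gamma class via the shift relation --- is structurally similar to the paper's but a genuinely different route in one respect. The paper first rewrites $I^\cX(t,z)$ as a ratio of Gammas using $\Gamma(x+1)=x\Gamma(x)$ and then applies the reflection formula so that, after substituting $-z$, the Gamma denominator $\Gamma(1-\{D_i(\beta)\}+\bar D_i/z)$ cancels exactly against $\Gamma(1-v_i+\bar D_i/z)$ from $\tGamma(T\cX)$ on the paired component, leaving $\Gamma(\bar D_i/z-D_i(\beta))$. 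You skip the reflection formula and instead want to step $\Gamma(1-\age_v(\bar D_i)+\bar D_i/z)$ down to $\Gamma(\bar D_i/z-D_i(\beta))$ while matching polynomial factors of $I^\cX(-z)$; this does work and makes the emergence of the $\prod_{v_i=0}\bar D_i$ factor transparent, though the step count is $1+\lfloor D_i(\beta)\rfloor$, which can be negative, so ``non-negative integer'' is imprecise (you iterate $\Gamma(x)=\Gamma(x+1)/x$ as well).

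The concrete gap is the indexing around the involution. You write that after reindexing the condition is $v(\beta)=v$, i.e.\ $\{-D_i(\beta)\}=v_i$, and then claim $\age_v(\bar D_i)=\{D_i(\beta)\}$ when $v=v(\beta)$. These contradict each other: by~\eqref{eqn:v} and the surrounding discussion, $\age_v(\bar D_i)=c_i(v)$, and $c_i(v(\beta))=\{-D_i(\beta)\}$, not $\{D_i(\beta)\}$. The bookkeeping consistent with the lemma is that $\inv^*$ in the Poincar\'e pairing forces $v(\beta)=\inv(v)$ when pairing $\one_v$ against the $\one_{v(\beta)}$ component of $I^\cX(-z)$, which gives $v_i=\{-c_i(v(\beta))\}=\{D_i(\beta)\}$; this is what the summation condition written as ``$D_i(\beta)=v_i$'' abbreviates. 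Your stated reindexing $v(\beta)=v$ would require replacing $v$ by $\inv(v)$ in $\tGamma$ and $\tch$ as well, which you do not do; the argument seems to land on the right formula only because you silently use the relation $\age_v(\bar D_i)=\{D_i(\beta)\}$, which is correct for the \emph{unreindexed} bookkeeping but inconsistent with the one you announced. A minor further point: for $v_i=0$ with $D_i(\beta)\ge 0$ the $c=0$ entries in numerator and denominator of the I-function's rational factor cancel each other, and the $\bar D_i$ is actually produced by the $c=0$ step of the shift relation applied to $\Gamma(1+\bar D_i/z)$, not by a surviving ``$c=0$ term in the denominator of $I^\cX(-z)$.''
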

\begin{proof}
    It is convenient to rewrite the I-function using the shift property of gamma functions $\Gamma(x+1) = x\Gamma(x)$:
    \begin{multline}
        I^{\cX}(t,z)
        = e^{\sum_{a=1}^k t_a p_a/z}\sum_{\beta \in \bK_{\eff}} e^{\sum_{a=1}^k t_a \beta_a} z^{-\sum_i D_i(\beta)+\{-D_i(\beta)\}} \times \\ \times \prod_{i, D_i(\beta)\in \bZ}\frac{\bar D_i}{z}\prod_{i=1}^r 
        \frac{\Gamma(\bar D_i/z+\{D_i(\beta)\})}{\Gamma(1+\bar D_i/z+D_i(\beta))} \one_{v(\beta)} 
    \end{multline}
    where curly braces denote the fractional part of a real number.
    We also use the reflection relation for the gamma functions $\Gamma(x)\Gamma(1-x)\sin(\pi x) = \pi$ to restate the last formula as \begin{multline}
        I^{\cX}(t,z) = e^{\sum_{a=1}^k t_a p_a/z}\sum_{\beta \in \bK_{\eff}} e^{\sum_{a=1}^k t_a \beta_a} z^{-\sum_i D_i(\beta)+\{-D_i(\beta)\}} \times \\ \times \prod_{i, D_i(\beta)\in \bZ}\frac{\bar D_i}{z}\prod_{i=1}^r(-1)^{1+D_i(\beta)-\{D_i(\beta)\}} 
        \frac{\Gamma(-\bar D_i/z-D_i(\beta))}{\Gamma(1-\{D_i(\beta)\}-\bar D_i/z)} \one_{v(\beta)}.
    \end{multline}
    We then compute $I^{\cX}(t, -z)$:
    \begin{multline}
        I^{\cX}(t,-z) = e^{-\sum_{a=1}^k t_a p_a/z}\sum_{\beta \in \bK_{\eff}} e^{\sum_{a=1}^k t_a \beta_a} z^{-\sum_i D_i(\beta)+\{-D_i(\beta)\}} \times \\ \times \prod_{i, D_i(\beta)\in \bZ}\frac{\bar D_i}{z}\prod_{i=1}^r 
        \frac{\Gamma(\bar D_i/z-D_i(\beta))}{\Gamma(1-\{D_i(\beta)\}+\bar D_i/z)} \one_{v(\beta)}.
    \end{multline}
    We also compute
    \[
        \widetilde{\Gamma}(T\cX){\tch}(\cL) = 
        \sum_{v \in \BoxS}z^{-\sum_{i=1}^r v_i}\prod_{i=1}^r \Gamma(1-v_i+\bar D_i/z) {\tch}_v(\cL)\one_{v}.
    \]
    Integrating the product of the two contributions above we get the claim.    
\end{proof}

\begin{proposition}
    There exists a nonempty open region $U \subset (\bL^{\vee}\otimes \bC) \times \widetilde\bC^*$ such that for $(t,z) \in U$ and $\tcL\in \tM$ the inverse Fourier transform of $Z_A^{\bC^r,G}(\tcL)\vert_{\blambda_a=\lambda_a}$ is convergent and is equal to the non-equivariant central charge for $\cX=[\bC^r\sslash_\eta G]$ up to a
    normalization factor.
    \[
       \InvM_{u}(Z^{\bC^r, G}_A(\widetilde\cL)|_{\blambda_a=\lambda_a}) =  (2\pi \sqrt{-1})^{-k}Z_A^{\cX}(\cL)=  \frac{z^{n}}{(2\pi\sqrt{-1})^{r}}(\tch(\cL) \tGamma(T\cX) , I^\cX(t,z))_{\mathcal{H}}.
    \]
    where $\cL$ is an image of $\tcL$ under the map $\tM \to \bL^\vee \to H^2(\cX;\bZ)=\bL^\vee/\sum_{j=r'+1}^r \bZ D_j$.
    \label{prop:fourier-a}
\end{proposition}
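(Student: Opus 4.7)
My plan is to compute the inverse Fourier transform on the left-hand side directly by residue calculus and match the result against the explicit series for $Z_A^{\cX}(\cL)$ given by the preceding lemma. The computation at the start of Section~\ref{sec:fourier} yields, after setting $u_a = \lambda_a/z$,
\[
(2\pi\sqrt{-1})^{r}Z_A^{\bC^r,G}(\tcL)|_{\blambda_a=\lambda_a} = z^{r}\prod_{i=1}^{r}e^{-2\pi\sqrt{-1}\,c_i \sum_{a}u_a l_i^{(a)}}\,z^{\sum_a u_a l_i^{(a)}}\,\frac{\Gamma\!\left(1+\sum_a u_a l_i^{(a)}\right)}{\sum_a u_a l_i^{(a)}}.
\]
Choosing $\gamma\in\bR^k$ inside the open cone $\{u : \sum_a u_a l_i^{(a)}>0 \text{ for all } i\}$, which is nonempty since $\eta\in\sum_{i\in I}\bR_{>0}D_i$ for some $I\in\cA$, the inverse Fourier integral converges absolutely on $\gamma+\sqrt{-1}\bR^k$ by the Stirling estimate \eqref{eq:centralAsymptotics}.

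Next I would close each $u_a$-contour in the direction dictated by $\Re(t_a)\ll 0$ (the large-radius region) and evaluate by residues. Each maximal cone $\si\in\Si(n)$ supplies a family of zero-dimensional residue loci: the simultaneous conditions $\sum_a u_a l_i^{(a)}=-m_i$ for $i\in I_\si'$ and $m_i\in\bZ_{\ge 0}$ uniquely determine $u\in\bC^k$ because $\{l^{(a)}_i\}_{i\in I_\si'}$ is a basis for $\bK_\si^\vee\otimes\bR$. Parameterizing these solutions by $\beta\in \bK_{\eff,\si}$ with $D_i(\beta)=m_i+c_i(v(\beta))$, the integer parts $m_i$ combined with $e^{\sum_a u_a t_a}$ yield the factor $e^{\sum_a t_a \beta_a}$, while the fractional parts assign a box element $v=v(\beta)\in\mathrm{Box}(\si)$ via \eqref{eqn:v}. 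The remaining Gamma factors for $i\in I_\si$, together with the $e^{-2\pi\sqrt{-1}\,c_i\sum_a u_a l_i^{(a)}}$ and $z^{\sum_a u_a l_i^{(a)}}$ factors, and the relation $h_a=\sum_i l_i^{(a)}c_i$ of \eqref{eq:ha}, reproduce exactly the factors $\tGamma(T\cX)$ and $\tch(\cL)$ restricted to $\cX_{v}$; the free $u$-variables account for $z^{\dim\cX_{v}-k}$, and the cohomological integration over $\cX_v$ emerges via the standard toric push-forward to the $\cT$-fixed point $\mathfrak p_\si$.

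The main obstacle is the combinatorial bookkeeping. Residues are naturally indexed by $(\si,\beta)$ with $\beta\in\bK_{\eff,\si}$, whereas the right-hand side is indexed by $(v,\beta)$ with $\beta\in\bK_{\eff}$; classes lying in $\bK_{\eff,\si}\cap\bK_{\eff,\si'}$ must not be overcounted, which is handled by toric fixed-point localization over $\cX_v$. One also needs the reflection identity $\Gamma(x)\Gamma(1-x)\sin(\pi x)=\pi$ to convert between the Gamma factors appearing naturally from the residues and those in the lemma, and to track the sign factors $(-1)^{1+D_i(\beta)-\{D_i(\beta)\}}$ that appear there. Convergence of the residue series in the claimed open region $U$ follows from Assumption~\ref{assump:positive}: the Fano condition controls the growth of $\tGamma(T\cX)$ along $\bK_{\eff}$ and ensures that $U$ contains the large-radius strip \eqref{eq:tStrip}.
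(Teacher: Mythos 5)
Your proposal takes a genuinely different route from the paper: the paper does not carry out the residue calculus at all, but instead cites the Higgs--Coulomb correspondence of Aleshkin--Liu (Theorem 5.6 there), isolates the grade restriction rule as a convergence lemma, factors out the $z$-dependence by the substitution $\tilde t_a = t_a - \log z\sum_i \lambda_a l_i^{(a)}$, and replaces the Calabi--Yau convergence estimates (Propositions A.3, A.5 of Aleshkin--Liu) by the Fano estimate in the appendix (Lemma~\ref{lem:convergence}). What you are sketching is effectively a re-derivation of that cited theorem by shifting contours and summing residues, so the approaches are equivalent in content; the paper's buys brevity and delegates the hard multi-variable residue bookkeeping, while yours is more self-contained but must reproduce that bookkeeping.

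Within your sketch there are a few points that need correction or tightening. First, the indexing of the residue loci is off: you write the conditions $\sum_a u_a l_i^{(a)} = -m_i$ for $i\in I'_\si$ and claim $\{l_i^{(a)}\}_{i\in I'_\si}$ is a basis for $\bK_\si^\vee\otimes\bR$, but $|I'_\si| = n$ while $\bK_\si^\vee = \bigoplus_{i\in I_\si}\bZ D_i$ has rank $k = |I_\si|$. The residue conditions should be imposed for $i\in I_\si$ (where $\{D_i\}_{i\in I_\si}$ is a $\bQ$-basis of $\bL^\vee_\bQ$); then $D_i(\beta)\in\bZ_{\ge 0}$ for $i\in I_\si$ and $\beta\in\bK_{\eff,\si}$, the pole is isolated and $0$-dimensional, and the box element $v(\beta)$ is read off from the fractional parts of $D_i(\beta)$ for $i\in I'_\si$ as in \eqref{eqn:v}. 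Second, ``close each $u_a$-contour'' is single-variable language; for $k>1$ the contour deformation crosses normal-crossing pole divisors, and one must either set up an iterated residue carefully or use the cylinder-contour apparatus described in the appendix — the decay of the Gamma factors wins over $e^{\sum u_a t_a}$ as you move toward the polar region (it does for any $\Re(t)$, not just $\Re(t)\ll 0$), but this has to be made uniform along the moving contour to justify the interchange with the infinite residue sum. Third, you have not identified the precise open region $U$ of $(t,z)$ required by the statement; the paper establishes this via the grade restriction rule $|\sum_a(\Im(t_a)-2\pi h_a)\nu_a| < \tfrac{\pi}{2}\sum_i|D_i(\nu)|$, which is strictly stronger than the existence of a single admissible $\gamma$ in the positive orthant. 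With these three points repaired, your residue-calculus proof does go through and in fact recovers the proof of the cited theorem.
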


    
\begin{proof}
This statement is a modification of the theorem~\cite{Aleshkin_Liu_23}[5.6] called the Higgs-Coulomb correspondence there. In order for the proof of the theorem
to work, we need to take care of convergence issues and $z$-dependence.

\begin{lemma}
    The integral
    \begin{equation}
        \InvM_{u}(Z^{\bC^r, G}_A(\widetilde\cL)|_{\blambda_a=\lambda_a}) = \prod_{a=1}^k (2\pi \sqrt{-1})^{-1}\int_{1+\sqrt{-1}\bR} du_a e^{u_a t_a} Z_A^{\bC^r,G}(\tcL)\vert_{\blambda_a=\lambda_a}
    \end{equation}
    is absolutely convergent if the following so-called grade restriction rule is satisfied
   
    \[
        |\sum_{a=1}^k (\Im(t_a)- 2\pi h_a)\nu_a| < \frac{\pi}{2}\sum_{i=1}^r |D_i(\nu)|, \;\;\; \forall \nu \in \bL\otimes\bC \backslash \{0\},
    \]
    where $h_a = \sum_{i=1}^r l_i^{(a)}c_i$ is defined in~\eqref{eq:ha}. Note, that this is the same condition as appears 
    in-\eqref{eq:tStrip} in the convergence analysis of the oscillatory integrals.
  
\end{lemma}
We note that the grade restriction rule is always satisfied if $\Im(t_a) = {\color{black}h_a}$.
\begin{proof}
    This follows lemma~\cite{Aleshkin_Liu_23}[5.8] word by word. We used the fact that $z$ is real positive and the fact that the Calabi-Yau condition of \cite{Aleshkin_Liu_23} does not play a role in this statement.
\end{proof}

Let us explain how to modify the proof of theorem~\cite{Aleshkin_Liu_23}[5.6] in our case. The main difference of this paper is that the coarse moduli 
space $X$ is assumed to be Fano, and $z$-dependence in the central charges. The $z$-dependence can be taken care of by doing the following 
coordinate change $u_a = \lambda_a/z, \; \tilde t_a = t_a - \log z\sum_{i=1}^r \lambda_a l^{(a)}_i$. After this coordinate change the equivariant
central charge becomes
\begin{align*}
    &\InvM_{\lambda} (Z_A^{\bC^r,G}(\tcL)\vert_{\blambda_a=\lambda_a})(t)\\
    = &\frac{z^r}{(2\pi \sqrt{-1})^{r} }\prod_{a=1}^k (2\pi \sqrt{-1})^{-1}\int_{1+\sqrt{-1}\bR} du_a e^{(t_a  {\color{black}- \sqrt{-1}h_a}) u_a } \prod_{i=1}^r z^{\sum_{a=1}^k  u_a l_i^{(a)}} \Gamma\left(\sum_{a=1}^k  u_a l_i^{(a)}\right)\\
    = & \frac{z^r}{(2\pi \sqrt{-1})^{r}} \prod_{a=1}^k (2\pi \sqrt{-1})^{-1}\int_{1+\sqrt{-1}\bR} du_a e^{(\tilde t_a {\color{black}- \sqrt{-1}h_a}) u_a } \prod_{i=1}^r  \Gamma\left(\sum_{a=1}^k  u_a l_i^{(a)}\right).
\end{align*}
We see that the $z$-dependence completely factors out of the computation. We also note that the integration contour choosen here satisfies the condition from the
definition~\cite{Aleshkin_Liu_23}[5.2] with $\alpha_i = 0$ by the choice of the basis.\\

The fact that $\sum_i D_i \ne 0$ requires a more careful analysis as it affects the convergence statements used in the proof.
The first affected statement is the proposition~\cite{Aleshkin_Liu_23}[A.3] that regulates the convergence of the central charge $Z^{\cX}_A(\cL')$.
The second statement that needs modification is corollary~\cite{Aleshkin_Liu_23}[5.12] that uses convergence proposition~\cite{Aleshkin_Liu_23}[A.5].
Essentially, Fano conditions makes the convergence even better than in the Calabi-Yau case. 
We use the lemma~\ref{lem:convergence} of the appendix to replace the propositions A.3 and A.5 in~\cite{Aleshkin_Liu_23}. 

Applying the non-equivariant limit of the modification of the theorem~\cite{Aleshkin_Liu_23}[5.6] and the non-equivariant limit of the theorem~\cite{Aleshkin_Liu_23}[4.20], and restoring the $z$-dependence we obtain the claimed formula:
\begin{equation}
    \InvM_{\lambda} (Z_A^{\bC^r,G}(\tcL)\vert_{\blambda_a=\lambda_a})(t) =  \frac{z^{n}}{(2\pi\sqrt{-1})^{r}}(\widetilde\Gamma(T\cX) \widetilde\ch(\cL), I(t, z))_{\mathcal{H}}
\end{equation}
in the non-empty common domain of convergence of the LHS and the RHS. The prefactor comes from comparing the normalizations of the central charges.

\end{proof}

Combining Proposition \ref{prop:fourier-a} and \ref{prop:fourier-b}, we obtain the following.
\begin{theorem}
\label{thm:main-1}

Let $\tcL\in \tM$ and $t \in \widetilde{\cM}$ satisfy the convergence condition~\eqref{eq:tStrip}.  Let $\cL$ be the image of $\tcL$ under $\tM \to \bL^\vee \to H^2(\cX;\bZ)$, and $F_{\tcL'}$ be the deformation of $F_{\tcL}$ compatible with $\Im(t)$ as in~\eqref{eq:fDeformation}. Then, we have the following mirror symmetry statement
for the central charges:
\begin{equation}
    Z_A^{\cX}(\cL)=(2\pi \sqrt{-1})^{-n}Z^{\cX}_B(F_{\tcL'}\cap  \bigcap_{a=1}^k \{\Re\tH_a=\Re(t_a)\}).
\end{equation}

\end{theorem}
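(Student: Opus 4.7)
The plan is to chain the two Fourier-theoretic propositions through the explicit $G$-equivariant central-charge identity on $\bC^{r}$ already computed at the start of Section \ref{sec:fourier}. That opening computation established
\[
Z_B^{\tM_\bC,G}(F_{\tcL})=(2\pi\sqrt{-1})^{r}\,Z_A^{\bC^{r},G}(\tcL)\vert_{\blambda_a=\lambda_a},
\]
so after applying $\InvM_{u}$ (with $u_a=\lambda_a/z$) termwise to both sides and checking that the Fourier integral converges, I obtain
\[
\InvM_{u}\bigl(Z_B^{\tM_\bC,G}(F_{\tcL})\bigr)=(2\pi\sqrt{-1})^{r}\,\InvM_{u}\bigl(Z_A^{\bC^{r},G}(\tcL)\vert_{\blambda_a=\lambda_a}\bigr).
\]

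Next I would substitute Proposition \ref{prop:fourier-b} on the left-hand side and Proposition \ref{prop:fourier-a} on the right-hand side. The former gives the B-model central charge of the restricted contour $F_{\tcL'}\cap \bigcap_{a=1}^{k}\{\Re \tH_a=\Re t_a\}$ on $\cX$, while the latter produces $(2\pi\sqrt{-1})^{-k} Z_A^{\cX}(\cL)$. Using the numerical identity $r-k=n$, cancellation of powers of $2\pi\sqrt{-1}$ then yields
\[
Z^{\cX}_B\!\bigl(F_{\tcL'}\cap \textstyle\bigcap_{a=1}^{k}\{\Re\tH_a=\Re t_a\}\bigr)=(2\pi\sqrt{-1})^{n}\,Z_A^{\cX}(\cL),
\]
which is equivalent to the asserted formula after dividing through by $(2\pi\sqrt{-1})^{n}$.

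The only genuine subtlety, and the main step that needs care, is verifying that the hypotheses of Propositions \ref{prop:fourier-a} and \ref{prop:fourier-b} are simultaneously met for the given $(\tcL,t)$. Proposition \ref{prop:fourier-b} requires the strip condition \eqref{eq:tStrip}, which is precisely the assumption of the theorem. Proposition \ref{prop:fourier-a} requires $(t,z)$ in the open region $U$ from the grade restriction rule; as noted in the text, this rule is automatic when $\Im(t_a)=h_a$, and it extends to the strip \eqref{eq:tStrip} by the same argument that controls the asymptotics \eqref{eq:centralAsymptotics}. Thus the two convergence domains overlap, and the chain of equalities above is valid on a non-empty open set of parameters $(t,z)$. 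Both sides of the final identity depend analytically on $(t,z)$ in their respective domains of absolute convergence, so the equality extends by analytic continuation throughout the region where both sides are defined. I expect no further obstacle beyond this simultaneous-convergence bookkeeping.
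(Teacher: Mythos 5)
Your proposal is correct and matches the paper's argument exactly: the paper's proof is the single sentence ``Combining Proposition \ref{prop:fourier-a} and \ref{prop:fourier-b}, we obtain the following,'' and you have simply made the chain of substitutions explicit --- applying $\InvM_u$ to the already-established identity $Z_B^{\tM_\bC,G}(F_{\tcL})=(2\pi\sqrt{-1})^{r}\,Z_A^{\bC^{r},G}(\tcL)\vert_{\blambda_a=\lambda_a}$, substituting the two propositions, and cancelling via $r-k=n$. Your bookkeeping of the overlapping convergence domains is a welcome elaboration of a point the paper leaves implicit.
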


\section{Local system of the mirror cycles}
\label{sec:local-sys}

We list some facts from \cite[Section 3]{Iritani_2009}:
\begin{itemize}
    \item There is an open dense ``good'' part $\cM^{\circ}$ of $\cM$ such that $$R^\vee_{\bZ,q}:=H_n(\cY_q;\Re(W^\cX)\gg 0;\bZ)$$ for $q\in \cMcc$form a local system $\cR^\vee$ of rank $N$ \cite[Proposition 3.12]{Iritani_2009}.
    \item There exists an $\epsilon>0$ such that $\{0<|q|<\epsilon\}\subset \cMcc$ \cite[Lemma 3.8]{Iritani_2009}.
\end{itemize}
We denote $\tcR^\vee \to  \tcMcc$ be the pullback. For a locally complete submanifold $F\in \tM_{\bC}$, if it represents an $[F]\in H_n(\cY_q;\Re(W^\cX)\gg 0)$, then the integral defining the central charge $Z_B^\cX(F)$ exists and depends on the class $[F]$ only. 

Given a $z>0$, there is a $\bC$-linear map $Z_B^\cX: R^\vee_q \to \bC$. It extends to a multi-valued holomorphic family of linear maps $R^\vee_q\to \bC$ over $q\in \cM$. Or one can regard it as a flat section of $\tcR$.

In Section \ref{sec:fourier} we show that for $\tcL=\cO(\sum_{i=1}^r c_i \tcD_i)$ and $t$ in a certain strip compatible with $\tcL$,
\[
    Z^{\cX,G}_B(F_{\tcL'}\cap  \bigcap_{a=1}^k \{\Re\tH_a=\Re(t_a)\}) = Z_A^{\cX}(\cL)(t).
\]
In this section, we explore what happens for more general $t$.

Following \cite[Section 3.1.2]{Iritani_2009} we define a splitting of the following exact sequence for the extended Picard group over $\bQ$
\begin{equation}\label{eq:extra_picard_exact_sequence}
   0\rightarrow \bigoplus_{i=r'+1}^r \bQ D_i \rightarrow \bL^\vee_\bQ \rightarrow H^2(\cX,\bQ) \rightarrow 0.
\end{equation}
For each $j\in\{r'+1,\dots,r\}$, suppose $b_j$ is contained in the cone generated by $I_j\subset\{1,\dots,r'\}$, i.e., $b_{j} = \sum_{i\in I_j} s_{ji} b_i$ for some $s_{ji} > 0$. We choose $D_j^\vee\in \bL \otimes \bQ$ such that
\begin{equation*}
    \langle D_i,D_j^\vee \rangle = \left\{\begin{array}{cc}
        1 & i=j  \\
        -s_{ji} & i \in I_j\\
        0 & i \notin I_j\cup\{j\}
    \end{array}\right.
\end{equation*}
Note that $\sum_{i\in I_j} s_{ji} \leq 1$ due to Assumption \ref{assump:positive}. We then call $\bL^\vee_\bQ = (\Ker(D_{r'+1}^\vee,\dots,D_r^\vee)) \oplus \bigoplus_{i=r'+1}^r \bQ D_i$ the canonical splitting of (\ref{eq:extra_picard_exact_sequence}), and $\iota:\Ker(D_{r'+1}^\vee,\dots, D^\vee_r)\cong H^2(\cX;\bQ)\hookrightarrow \bL_\bQ^\vee$ is canonically identified as a subspace of $\bL^\vee_\bQ$. This splitting induces a decomposition $\tcM=\tcM_\Kah \times \tcM_\orbifold$ and $\cM=\cM_\Kah \times \cM_\orbifold,$ where 
\[\tcM = \bL^\vee_{\bC}, \; \tcM_\Kah = \Ker(D_{r'+1}^\vee,\dots,D_r^\vee), \; \tcM_\orbifold = \bigoplus_{i=r'+1}^r \bC D_i.\]

In addition, we choose a splitting $\ell$ of the sequence \eqref{eqn:MtM} over $\bQ$
\begin{equation} \label{eq:ell}
0\longrightarrow M_\bQ\longrightarrow \tM_\bQ \stackrel{\stackrel{\ell}{\leftarrow}}{\longrightarrow} \bL^\vee_\bQ \longrightarrow  0. 
\end{equation}
Under the splitting $\bL^\vee_\bQ= \Ker(D_{r'+1}^\vee,\dots,D_r^\vee) \oplus \bigoplus_{i=r'+1}^r \bQ D_i$ and $\tM_\bQ=\bigoplus_{i=1}^{r'} \bQ D_i^\bT \oplus \bigoplus_{i={r'+1}}^r \bQ D_i^\bT$, we require
\begin{gather*}
\ell(D_i)=D_i^\bT,\ i=r'+1,\dots,r.
\end{gather*}

Given an equivariant line bundle $\cL^\sharp=\cO(\sum_{i=1}^{r'} c_i \cD_i) = \sum_{i=1}^{r'}c_i \bar{D}^{\bT}_i$, it corresponds to a non-equivariant line bundle $\cL=\sum_{a=1}^{k'} {h_a} \bar p_a\in \Pic(\cX)$ as in \eqref{eq:ha}, where $\bar p_a$ are the images of $p_a$ under $\bL^\vee \to \Pic(\cX)$ for $1<a<k'$.

We have the following commutative diagram:
\begin{equation}\label{eqn:diagram}
\begin{CD}
&&  && 0 && 0 \\
& & @.  @VVV  @VVV \\
 @. 0 @>>> \bigoplus_{i=r'+1}^{r} \bQ D_i^{\bT} @>{\cong}>> \bigoplus_{i= r'+1}^{r} \bQ D_i @>>> 0\\
& & @VVV  @VVV  @VVV \\
0 @>>> M_\bQ  @>{\phi^\vee}>> \tM_\bQ  @>{\psi^\vee}>{\stackrel{\longleftarrow}{\ell}}> \bL^\vee_\bQ @>>> 0\\
    & & @VV{\cong}V  @VV{\nu_\bT}V  @V{\iota\uparrow}V{\nu}V & \\
0 @>>> M_\bQ  @>{\bar{\phi}^\vee}>> H^2_{\bT}(\cX;\bQ)  @>{\bar{\psi}^\vee}>> H^2(\cX;\bQ) @>>> 0\\\
&&  @VVV @VVV @VVV & \\
&& 0 && 0 && 0 &  \\
\end{CD}
\end{equation}


We start by constructing cycles in $M_\bR \times N_\bR$. They will be the Lagrangian cycles produced by the \emph{coherent-constructible correspondence} (CCC) \cite{bondal_2006, F-L-T-Z_2011,Kuwagaki2020}, a version of homological mirror symmetry for toric orbifolds.

For $c=(c_1,\dots, c_{r'})\in \bR^{r'}$, we define the conical piecewise-linear Lagrangian set
\[
\widetilde\Lambda_c10=\bigcup_{\sigma \in \Si} (-\si) \times \si^\perp_{\bZ,c} \subset N_\bR\times M_\bR.
\]
where $\si^\perp_{\bZ,c}=\{\chi\in M_\bR: \langle\chi,b_i \rangle\in c_i+\bZ,\ \forall \rho_i \in \si\}.$ The conical Lagrangian set $\Lambda_c\subset N_\bR\times M_\bR/M$ is the image of $\widetilde \Lambda_c$ under the covering map $M_\bR\to M_\bR/M$. We denote $\widetilde\Lambda=\widetilde\Lambda_0$ and $\Lambda=\Lambda_0$.

The CCC states that there are exact quasi-equivalences of dg categories for $\bT$-equivariant and non-equivariant coherent sheaves on $\cX$
\begin{align*}
\kappa_\bT:\ &\Coh_{\bT}(\cX)\xrightarrow{\sim} \Sh_{c,\widetilde{\Lambda}}(M_\bR),\\
\kappa:\ &\Coh(\cX)\xrightarrow{\sim} \Sh_{\Lambda}(M_\bR/M).
\end{align*}
The category $\Sh_{c,\widetilde{\Lambda}}(M_\bR)$ is the dg category of $\bC$-module sheaves with constructible and compactly-supported cohomology whose \emph{singular support} is in $\widetilde\Lambda$. Similarly the $\Sh_{\Lambda}(M_\bR)$ is the dg category of $\bC$-module sheaves with constructible cohomology whose singular support is in $\Lambda$. The \emph{characteristic cycle} map $\CC$ (\cite[Chapter IX]{kashiwara_schapira}), which associates a conical Lagrangian cycle to a constructible sheaf. We denote 
\[
\tS(-c)=\CC(\kappa_\bT(\cL^\sharp)),
\]
where $\cL^\sharp=\cO(\sum_{i=1}^{r'} c_i \cD_i)$ for $c_i\in \bZ$ is an equivariant line bundle on $\cX$. 

To compute $\tS(-c)$ explicitly, consider the construction of $\kappa_\bT$ as in \cite[Corollary 3.5]{F-L-T-Z_2011} and \cite[Theorem 5.14]{FLTZ14}. Let $\si_I$ be the cone spanned by $\rho_i$ for any subset $I\subset \{1,\dots, r'\}$ if such cone $\si_I\in \Si$.


The following resolution of $\cL^\sharp$ 
\[
\bigoplus_{\dim \si_I=n} \iota_{\si_I*}\cL^\sharp\vert_{\cX_{\si_I}} \to \bigoplus_{\dim \si_I=n-1} \iota_{\si_I*} \cL^\sharp\vert_{\cX_{\si_I}} \to \dots
\]
is quasi-equivalent to $\cL^\sharp$. Here $\cX_{\si_I}$ is the affine toric orbifold given by the cone $\si_I$ and $\iota_{\si_I}$ is its inclusion into $\cX$. 

For any $I$ such that $\si_I\in\Si$, let $C^\vee_I=\{m\in M_\bR, \langle m, \si_I\rangle \geq 0 \}$ be the dual cone of $\si_I$. The equivariant line bundle $\cL^\sharp$ restricted to $\cX_{\si_I}$ is just the structure sheaf on $\cX_{\si_I}$ twisted by a character $\chi_I = \chi_I(c)\in M_\bQ/ \si_I^\perp$. Here $\chi_I(c)$ is a linear function of $c=(c_1,\dots,c_{r'})$, characterized by $\langle \chi_I,b_i \rangle = -  c_i$ for each $1$-cone $\rho_i \in \si_I$.

Let $C_I^{\chi_I,\vee}=\chi_I + C_I^\vee\subset M_\bR$. Relaxing to quasi-coherent sheaves (while the images are no longer compactly supported), the CCC functor $\kappa_\bT$ is explicitly given by
\[
\kappa_\bT( \iota_{\si_{I}*} \cL^\sharp\vert_{\cX_{\si_I}} ) =j_{(C^{\chi_I,\vee}_I)^\circ !}(\omega_{(C^{\chi_I,\vee}_{I})^\circ}),
\] 
where $j_{(C^{\chi_I,\vee}_I)^\circ}:(C^{\chi_I,\vee}_I)^\circ\hookrightarrow M_\bR$ is the inclusion and $\omega_{(C^{\chi_I,\vee}_{I})^\circ}$ is the dualizing sheaf on $(C^{\chi_I,\vee}_{I})^\circ$, isomorphic to the shifted constant sheaf $\bC_{(C^{\chi_I,\vee}_{I})^\circ}[-\dim(\cX)]$. Notice that 
\[
\CC(j_{(C^{\chi_I,\vee}_I)^\circ !}(\omega_{(C^{\chi_I,\vee}_{I})^\circ}))= \tS_I(-c):= \sum_{\text{cone } C \subset \si_I}(\chi_I+C^\perp \cap C^{\vee}_I)\times (-C) \subset M_\bR \times N_\bR.
\]

Therefore, we define $\tS(-c)$ as the following for $c=(c_1,\dots, c_{r'})$ where $c_i\in \bR$ simply by extending the linear function $\chi_I(c)$ to non-integer $c$.
\begin{definition} 
\begin{equation}
\label{eqn:CCC-cycle-definition}
\tS(-c)=\sum_{\substack{I\subset\{1,\dots,r'\},\\ \text{$\sigma_I$ is a cone}}} (-1)^{|I|-n}\tS_I(-c).
\end{equation}
\label{def:CCC-cycle-definition}
\end{definition}

It is obvious that $\tS(-c)$ has no boundary since  $\tS_I(-c)$ has no boundary. And $\tS(-c)$ is horizontally compactly supported since $\kappa_\bT$ maps coherent sheaves, including line bundles, to compactly supported constructible sheaves. 

We call such cycle $\SYZ(\cL^\sharp):=\CC(\kappa_\bT(\cL^\sharp))=\tS(-c)$ the SYZ mirror cycle of $\cL^\sharp,$ following the mirror symmetry principle established in \cite{SYZ1996}.


\begin{remark}
When the $\bT$-equivariant line bundle $\cL^\sharp$ is \emph{ample}, its equivariant moment polytope is
\[ 
\Delta_c=\{m\in M_\bR: \langle m, b_i\rangle \geq -c_i, i=1,\dots, r'\}.
\]
Under the CCC functor 
\[
\kappa_{\bT}(\cL^\sharp)=j_{\Delta^\circ_c!} \omega_{\Delta^\circ_c},
\]
where $j_{\Delta_c^\circ}:\Delta^\circ_c\hookrightarrow M_\bR$ is the embedding of the open subset $\Delta^\circ_c$, and $\omega_{\Delta_c^\circ}$ is the dualizing sheaf. We have 
\[
\CC(j_{\Delta^\circ_c!} \omega_{\Delta^\circ_c})=\tS(-c)=\sum_{\text {$F$ is face of $\Delta_c$}} F \times (-C_F) .
\]
Here $C_F$ is cone dual to $F$. 
\end{remark}

We give another description for $\tS(-c)$. For each $c = (c_1,\dots,c_{r'})\in\bR^{r'}$ and $i\in\{1,\dots,r'\}$, let $H_i(c) = \{m\in M_\bR\mid \langle m,b_i \rangle = -c_i\}$ and $V_i^{\pm}(c) = \{m\in M_\bR\mid \langle m,b_i \rangle \geq(<) -c_i\}$ be the hyperplanes and closed(open) halfspaces in $M_\bR$. 

The hyperplane arrangement $\{H_i(c)\}_{i=1}^{r'}$ provides a cellular decomposition of $M_{\bR}$ into pieces.

For each $I \subset \{1,\dots,r'\}$, set
\[
    H_{I}(c) = \bigcap_{i \in I} H_i(c), V_{I}(c)^\pm = \bigcap_{i \in I} V_i^\pm(c).
\]
We set $H_{\emptyset} = V_{\emptyset}^{\pm} = M_\bR$. For each $I \subset \{1,\dots,r'\}$ and $J \subset \{1,\dots,r'\}\backslash I$, let $\mathrm{d}_{I,J} = \dim(H_{I}(c) \cap V_{J}^+(c) \cap V^-_{\{1,\dots,r'\} \backslash (I \cup J) }(c))$, we consider the subset

$$
    \tau_{I,J}(c) = 
    \begin{cases}
        \overline{H_{I}(c) \cap V_{J}^+(c) \cap V^-_{\{1,\dots,r'\} \backslash (I \cup J) }(c)}, &  \mathrm{d_{I,J}} + \dim(\Span_{\bR}\{b_i\mid i \in I\}) = n,\\
         \emptyset, & \mathrm{d_{I,J}} + \dim(\Span_{\bR}\{b_i\mid i \in I\}) < n.
    \end{cases}
$$
We call it a cell of $H_{I}(c)$ whenever it is not empty. We say that $I$ generates a cone if $\{b_s, s\in I\}$ generates a cone of $\Sigma$ and denote the cone by $\sigma_{I}$. We say $I=\emptyset$ generates the cone which is the origin of $N_{\bR}$, and set $\sigma_{\emptyset}$ to be the origin. For each cell $\tau_{I,J}(c)$, set
\begin{align*}
    \mathfrak{D}_k(I,J) = &\{K \subset \{1,\dots,r'\}  \backslash I \mid |I| + |K| + k = n, K\subset J, \\
    & I\amalg K\text{ generates a cone}\},\\
    d_k(I,J) = & |\mathfrak{D}_k(I,J)|.
\end{align*}
In other words, $d_k(I,J)$ is a number all the possible ways to complete $\sigma_I$ to a codimension $k$ cone $\sigma_{I\sqcup K}$ such that the cell
$\tau_{I,J}(c)$ is contained in $V^+_K(c)$.
We call 
\begin{equation} \label{eq:multiplicity}
    m_{I,J} = \sum_{k=0}^{r'} (-1)^k d_{k}(I,J)
\end{equation}
the \textit{multiplicity} of $\tau_{I,J}(c)$. Note that $m_{I,J}$ is independent of $c$.

\begin{proposition}
With $c \in \bR^{r'}$, the cycle $\tS(-c)$ in $M_{\bR} \times N_{\bR}$ is by the formula
\[
    \tS(-c) = \sum_{\{I \mid I \text{ generates a cone}\}, I \cap J = \emptyset} m_{I,J} (\tau_{I,J}(c) \times (-\sigma_{I})),
\]
where we set $\tau_{I,J}(c) \times (-\sigma_{I}) = 0$ if $\tau_{I,J}(c) = \emptyset$.
\end{proposition}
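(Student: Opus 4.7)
The strategy is to reorganize the double sum defining $\tS(-c)$ in Definition~\ref{def:CCC-cycle-definition} by collecting terms according to the cone $\sigma_K$ appearing in the $N_\bR$-factor, and then subdividing the resulting $M_\bR$-part into cells of the hyperplane arrangement $\{H_i(c)\}$. The identification of the coefficients will then be a purely combinatorial calculation matching the definition \eqref{eq:multiplicity} of $m_{I,J}$.

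First I would unpack the formula for $\tS_I(-c)$. Since $\sigma_I$ is simplicial, its faces are precisely the cones $\sigma_K$ for subsets $K\subset I$. For such a $K$, a direct computation using $\langle \chi_I,b_i\rangle=-c_i$ for all $i\in I$ gives
\[
\chi_I+\sigma_K^\perp\cap C_I^\vee
= H_K(c)\cap V^+_{I\setminus K}(c),
\]
which depends on $I$ only through the set $I\setminus K$. Substituting this into \eqref{eqn:CCC-cycle-definition} and swapping the order of summation yields
\[
\tS(-c)=\sum_{K:\,\sigma_K\text{ cone}}\Biggl(\sum_{\substack{I\supset K\\ \sigma_I\text{ cone}}}(-1)^{|I|-n}\,H_K(c)\cap V^+_{I\setminus K}(c)\Biggr)\times(-\sigma_K).
\]

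Second, I would decompose each closed set $H_K(c)\cap V^+_{I\setminus K}(c)$ as a union of closed cells of the hyperplane arrangement restricted to $H_K(c)$: it is precisely $\bigcup_{J'} \tau_{K,J'}(c)$ where $J'$ runs over subsets of $\{1,\dots,r'\}\setminus K$ containing $I\setminus K$. Since $\sigma_K$ is simplicial of dimension $|K|$, any nonempty cell $\tau_{K,J'}(c)$ has top dimension $n-|K|$ inside $H_K(c)$, so the pieces fit together as honest $n$-chains with no lower-dimensional boundary corrections. Interchanging the sums once more to group by a cell $\tau_{K,J}(c)$, its coefficient becomes
\[
\sum_{\substack{I\supset K,\ I\setminus K\subset J\\ \sigma_I\text{ cone}}}(-1)^{|I|-n}
=\sum_{\substack{L\subset J\\ \sigma_{K\sqcup L}\text{ cone}}}(-1)^{|K|+|L|-n},
\]
by writing $L=I\setminus K$.

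Third, I would identify this signed count with $m_{K,J}$. Reindexing $L$ by its size $\ell=|L|$ and setting $k=n-|K|-\ell$ (so that $|K|+|L|+k=n$ in the notation of $d_k(K,J)$), the signed count becomes
\[
\sum_{k}(-1)^{-k}\,d_k(K,J)=\sum_{k}(-1)^k d_k(K,J)=m_{K,J},
\]
which is exactly \eqref{eq:multiplicity}. Cells that are empty due to the dimension mismatch $d_{K,J}+\dim\Span\{b_i:i\in K\}<n$ contribute zero on both sides, so the formula holds as stated. The main step to watch is the cell-decomposition bookkeeping in the second paragraph: one must verify that each top-dimensional cell of the induced arrangement on $H_K(c)$ is of the form $\tau_{K,J}(c)$ for a unique $J$ disjoint from $K$, and that no boundary cancellations arise when summing the indicator chains $H_K(c)\cap V^+_{I\setminus K}(c)$; everything else is an elementary combinatorial rearrangement.
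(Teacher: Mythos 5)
Your proposal is correct and follows essentially the same approach as the paper: for each basic piece $\tau_{I,J}(c)\times(-\sigma_I)$, count the signed number of resolutions $\tS_L(-c)$ that contain it. The paper states this in two sentences; you spell out the intermediate steps—unpacking $\chi_L+\sigma_I^\perp\cap C_L^\vee=H_I(c)\cap V^+_{L\setminus I}(c)$, the cell decomposition of $H_K(c)\cap V^+_{I\setminus K}(c)$ restricted to the hyperplane arrangement, and the reindexing $\sum_k(-1)^k d_k(K,J)=m_{K,J}$—which is a useful expansion of the same argument. The bookkeeping you flag at the end (only top-dimensional cells $\tau_{K,J}(c)$ of the arrangement on $H_K(c)$ enter, enforced by the dimension condition in the definition of $\tau_{I,J}(c)$) is indeed exactly the subtlety the convention $\tau_{I,J}(c)=\emptyset$ is built to handle, so no further correction is needed there.
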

\begin{proof}
From Definition \ref{def:CCC-cycle-definition}, one just needs to count the signed number of times that $\tau_{I,J}(c)\times(-\sigma_I)$ appears in $\tS_L(-c)$, where $L$ ranges over all cones $\sigma_L$ in $\Si$. This happens whenever $L = I\sqcup K$ for some $K\subset J$. Notice that $m_{I,J}$ is the signed number of times $\sigma_{I\sqcup K}$ is a cone of $\Sigma$ for all possible $K \subset J$. Therefore $m_{I,J}$ is the signed counting of all cones $\si_L$ such that $\tau_{I,J}(c)\times(-\sigma_I) \subset \tS_L(-c)$.
\end{proof}

\begin{example}
    Consider $\cX = \bP^2$ with $b_1 = (1,0), \; b_2 = (0,1), \; b_3 = (-1,-1)$. We have
    \[
        m_{I,J} = 
        \begin{cases}
        0 & \text{if } (|I|,|J|) = (0,1),(0,2),(1,1),(3,0)\\
        1   & \text{if } (|I|,|J|) = (0,0),(0,3),(1,2),(2,0),(2,1)\\
        -1 & \text{if } (|I|,|J|) = (1,0)
        \end{cases}
    \]
    Choose $c = (c_1,c_2,c_3)$. We present a few cells $\tau_{I,J}$ with multiplicities $m_{I,J}$ for the cases $c_1+c_2+c_3>0$ and $c_1+c_2+c_3<0$ in Figure \ref{fig:cells}.
    \begin{figure}[ht]
    \centering
    \begin{subfigure}{0.45\textwidth}
        \centering
        \def\svgwidth{\textwidth}
        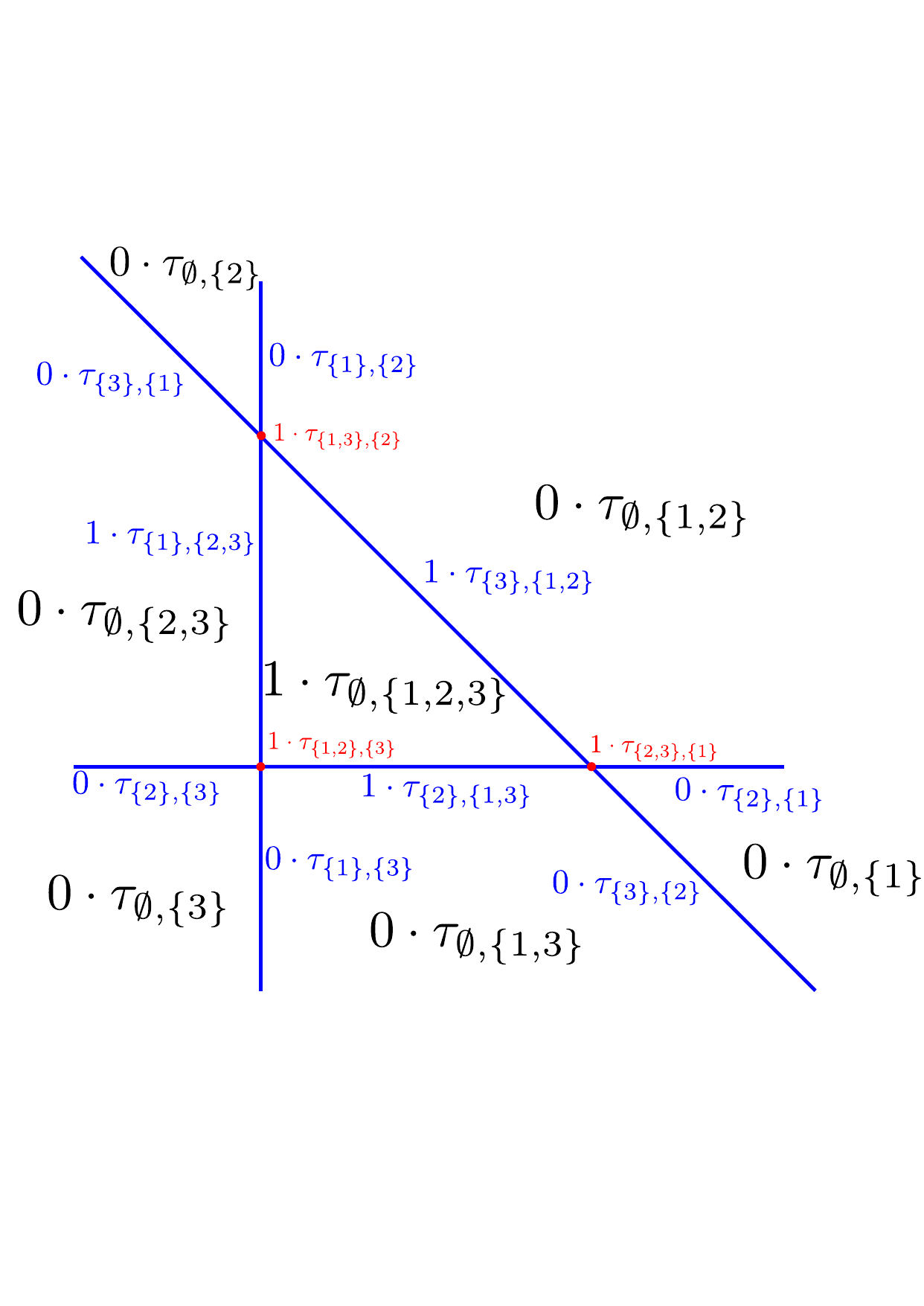
        \caption{$c_1+c_2+c_3>0$}
        \label{fig:cells_c>0}
    \end{subfigure}
    \hfill
    \begin{subfigure}{0.45\textwidth}
        \centering
        \def\svgwidth{\textwidth}
        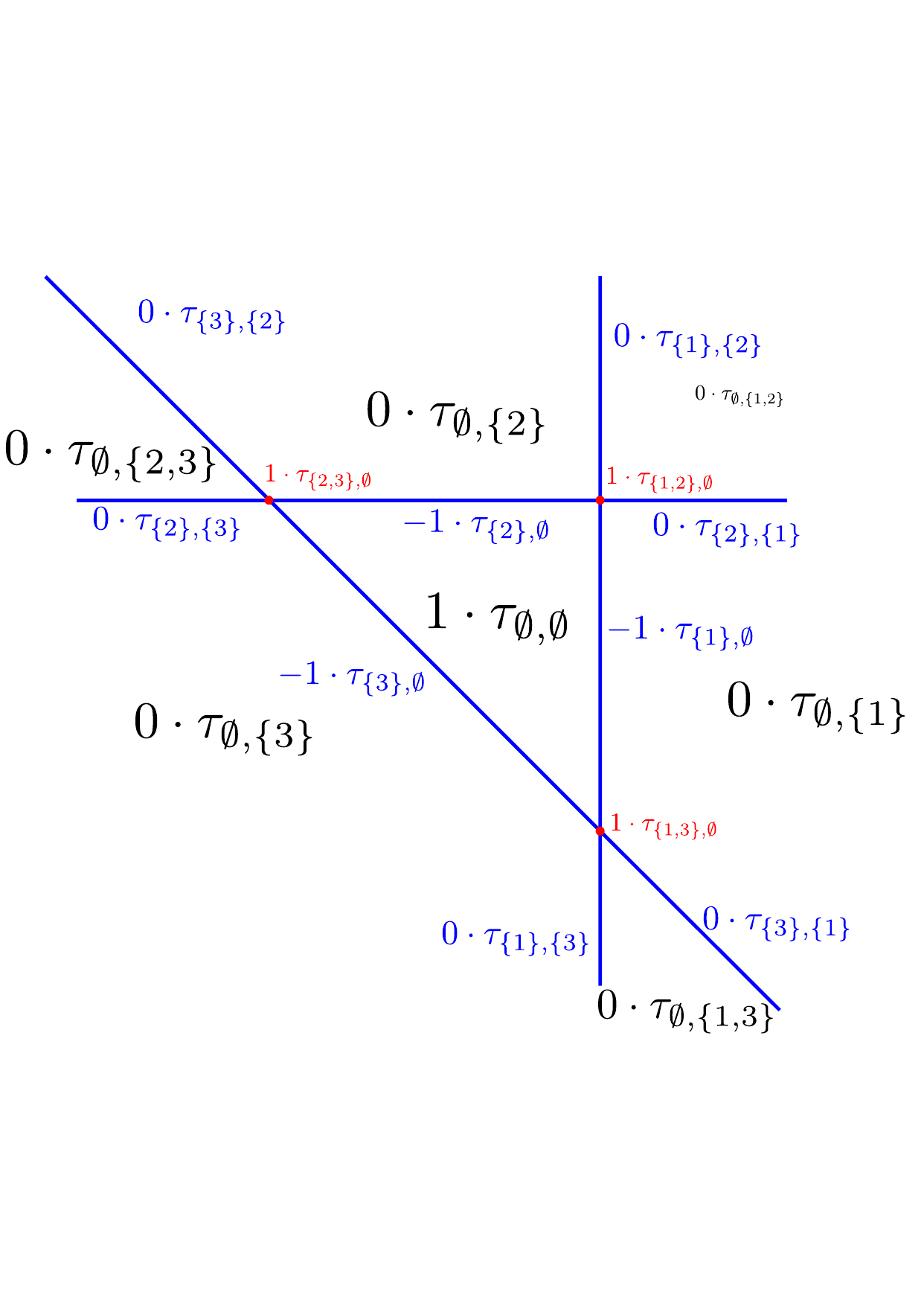
        \caption{$c_1+c_2+c_3<0$}
        \label{fig:cells_c<0}
    \end{subfigure}
    \caption{Cells $\tau_{I,J}$ for $\cX = \bP^2$}
    \label{fig:cells}
  \end{figure}

\end{example}

We then construct a bijection $\rho:N_{\bR} \rightarrow M_\bR$. Set
\begin{align*}
    P &= \conv\{b_i\mid i\in\{1,\dots,r'\}\} \subset N_\bR, \\
    \beta_i(m) &= \langle m, b_i\rangle,\\
    U_I &= \{ m \in M_\bR \mid \max_{i\in I}\beta_i(m) > \beta_j(m), \forall j \in \{1,\dots,r'\} \backslash I\} \subset M_\bR.
\end{align*}

\begin{lemma}\label{lemma:iso_from_N_to_M}
    There exists a conical homeomorphism $\rho:N_\bR\rightarrow M_\bR$ such that for any $I\subset\{1,\dots,r'\}$ which generates a cone $\sigma_I$, we have 
    \[
        \rho(-\sigma_I) \subset U_I.
    \]
\end{lemma}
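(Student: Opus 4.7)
The plan is to follow the approach previewed in the manuscript's commented section (cf.~\cite{zhou_2020_lagrangian}): take $\rho(n) := d\varphi(-n)$ for $n \neq 0$ and $\rho(0) := 0$, where $\varphi : N_\bR \to \bR$ is a smooth, strictly convex, positive-homogeneous-of-degree-two function \emph{adapted to the Fano polytope} $P$, meaning that for every face $F$ of $P$ of positive dimension the restriction $\varphi|_F$ attains its minimum uniquely at an interior point of $F$. The Fano Assumption~\ref{assump:positive} makes $P$ a strictly convex polytope with $0$ in its interior and the $b_i$ as vertices, and \cite[Proposition~2.10]{zhou_2020_lagrangian} then supplies such an adapted $\varphi$.

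Conicality of $\rho$ is immediate from the degree-one positive-homogeneity of $d\varphi$. For the homeomorphism property it suffices to show that $d\varphi : N_\bR \to M_\bR$ is a homeomorphism: strict convexity of $\varphi$ makes $d\varphi$ injective; coercivity of $\varphi$ (from positive-homogeneity of degree two together with positivity on the unit sphere) combined with surjectivity of the Gauss map of the smooth strictly convex sublevel set $\{\varphi \le 1\}$ gives surjectivity; and the inverse function theorem provides continuity of the inverse.

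For the key inclusion $\rho(-\sigma_I) \subseteq U_I$, for each $I$ generating a cone, consider the face $F_I = \mathrm{conv}\{b_i : i \in I\}$ of $P$ and its unique interior minimum $p_I$ of $\varphi|_{F_I}$ (given by the adapted condition). The Lagrange multiplier relation makes $\langle d\varphi(p_I), b_i \rangle$ constant for $i \in I$, while strict convexity of $\varphi$ combined with $b_j \notin F_I$ for $j \notin I$ and the ``outward'' position of $b_j$ relative to the tangent hyperplane to $\{\varphi = \varphi(p_I)\}$ at $p_I$ yields the strict separation $\langle d\varphi(p_I), b_j \rangle < \langle d\varphi(p_I), b_i \rangle$. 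Propagating this separation along rays in $\sigma_I^\circ$ to every $n$, via smoothness of $\varphi$ and a continuity/deformation argument, gives $\rho(-\sigma_I) \subset U_I$. The main obstacle is precisely this extension step: the separation is cleanest at the distinguished point $p_I$, and spreading it to the whole open cone $\sigma_I^\circ$ requires the full strength of the adapted strict convexity hypothesis rather than just smooth strict convexity; the Fano assumption, via the strict convexity of $P$, is what supplies the geometric room needed.
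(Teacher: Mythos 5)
Your proposal follows the route sketched in a commented-out draft section of the source (via Zhou's \emph{adapted} convex function $\varphi$ and $\rho(n)=d\varphi(-n)$), which is genuinely different from the paper's actual construction. The paper builds $\rho$ combinatorially: it sends the barycenter $b_I$ of each face $F_I=\sigma_I\cap\partial P$ to the barycenter $b_I^\vee$ of the dual face $F_I^\vee\subset\partial P^\vee$, extends linearly over simplices of the barycentric subdivisions, then extends radially; the inclusion $\rho(-\sigma_I)\subset U_I$ is then read off from the identity $U_I=\bigl(\bigcup_{I'\subset I}\sigma_{I'}^\vee\bigr)^{\circ}$, where $\sigma_{I'}^\vee$ is the cone over $F_{I'}^\vee$. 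That construction requires no smooth convex analysis at all, which is presumably why the authors preferred it.

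There is a genuine gap in your key step. You establish $d\varphi(p_I)\in U_I$ only at the single distinguished point $p_I$ (the interior minimum of $\varphi|_{F_I}$), using the Lagrange condition and strict convexity there; the sentence about ``propagating this separation along rays in $\sigma_I^{\circ}$ via smoothness and a continuity/deformation argument'' is not an argument, and the needed inequality is not automatic. Concretely, for $m\in F_I^{\circ}$ with $m=\sum_{i\in I}c_ib_i$, $c_i>0$, $\sum c_i=1$, the Euler relation gives $\sum_i c_i\langle d\varphi(m),b_i\rangle = 2\varphi(m)$, hence $\max_{i\in I}\langle d\varphi(m),b_i\rangle\ge 2\varphi(m)$, while strict convexity gives $\langle d\varphi(m),b_j\rangle < \varphi(m)+\varphi(b_j)$ for $j\notin I$. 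These two estimates close the argument only when $\varphi(b_j)\le\varphi(m)$, but the adapted condition controls only interior minima of $\varphi$ on faces of $P$ and says nothing about the relative sizes of $\varphi$ at a point of $F_I^{\circ}$ versus a non-adjacent vertex $b_j$. At points $m$ near a vertex with a small $\varphi$-value you would need to chase the adapted-minimum inequalities along a chain of intermediate faces of $\partial P$ between $F_I$ and $b_j$; this is exactly the step you flag as ``the main obstacle'' and do not carry out. Until that extension is supplied, the proposal does not prove the lemma; the barycentric construction in the paper is the clean way to avoid the issue.
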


\begin{proof}
    Since the fan $\Sigma$ is Fano, $F_I := \sigma_I \cap \partial P$ is a face of $P$ whose vertices are $\{b_i \mid i\in I\}$, and the dual poyltope is given by 
    \[
        P^\vee = \{m \in M_\bR \mid \langle m,n \rangle \leq 1, \forall n \in P \}.
    \]
    Then $F_I^\vee := \{m\in M_\bR\mid \langle m,n_i \rangle = 1,\forall i\in I\} \cap \partial P^\vee$ is a face of $P^\vee$. 
     
    Let $\sigma_I^\vee$ be the cone over $F_I^\vee$, then
    \[
        U_I = (\bigcup_{I'\subset I}\sigma_{I'}^\vee)^\circ.
    \]
    Consider the barycentric subdivisions $\mathfrak{B}$ and $\mathfrak{B}^\vee$ of $\partial P$. Let $b_I$, $b_I^\vee$ be the barycenters of $F_I$, $F_I^\vee$. For any $\tilde{n}\in \partial P$, let $\Delta = \conv\{b_{I_1},\dots,b_{I_s}\}$ be the minimal simplex of $\mathfrak{B}$ which contains $\tilde{n}$. Then $\tilde{n} = \sum_{j=1}^s \mu_j b_{I_j}$ for some $\mu_j\in(0,1)$ with $\sum_{j=1}^s\mu_j = 1$. Let us set
    \[
    \tilde{\rho}(\tilde{n}) = \sum_{j=1}^s\mu_jb^\vee_{I_j}\in\partial P^\vee.
    \]
    Then for any $n\in N_\bR$, let $\tilde{n} = \R_{>0}\cdot n \cap\partial P$ and $n = l \tilde{n}$ for some $l>0$. Let us set
    \[
        \rho(n) = -l\tilde{\rho}(\tilde{n}).
    \]
    Then we have $\rho(-\sigma_I)\subset U_I$.
\end{proof}

\color{black}

Now let us define the following isomorphism
\begin{align*}
\si_t: M_\bR \times M_\bR =(\widetilde{\bC^*})^n &\stackrel{\sim}{\rightarrow}  \tcY_t,\\
(\log Z_1,\dots, \log Z_n) &\mapsto (\sum_{a=1}^k t_a{\ell_{1a}} + \sum_{j=1}^n b_{1j} \log Z_j,\dots, \sum_{a=1}^k t_a {\ell_{ra}}+  \sum_{j=1}^n b_{rj}\log Z_j)
\end{align*}
for $t = (t_1,\dots,t_k) \in \tcM$. Then we set
\[
    P_t:M_\bR\times N_\bR \rightarrow\tcY_t
\]
to be $P_t = \sigma_t \circ(\id_M \times \rho)$. Let $\pi_M:M_\bR\rightarrow M_\bR/M$ be the quotient map.

For $c=(c_1,\dots,c_{r'})\in \tM$, let $\cL^\sharp=\cO(\sum_{i=1}^{r'} c_{i} \cD_i)$ be a $\bT$-equivariant line bundle on $\cX$. Let $c_j=0$ for $j>r'$, and let $h_a=2\pi \sum_{i=1}^r l^{(a)}_i c_i$. 


\begin{definition} \label{def:SYZ_cycle}
    We define the following family of cycles for $t\in \tcM$
    \[
        \fP_c(t) = P_t\left(\tS\left(\ell'\left(\Im(t)/2\pi\right)-c\right)\right),
    \]
    where $\ell'=\nu_\bR\circ \ell$ and $\ell$ is the splitting~\eqref{eq:ell}.    
\end{definition}

\begin{proposition} \label{prop:cycles_are_good}
    There is a small neighbourhood $|q|<\epsilon$ in $\cM^\circ$ such that with $q=e^t$, $\fP_c(t)$ represents an element in $H_n(\cY_{e^t};\realpart(W^\cX) \gg 0)$.
\end{proposition}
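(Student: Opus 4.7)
The plan is to verify two things about $\fP_c(t)=(P_t)_*\tS(\ell'(\Im(t)/2\pi)-c)$: first, that it is a closed $n$-chain in $\cY_{e^t}$, and second, that $\realpart W^\cX$ is proper and bounded below on it. For the first, I would note that $\tS(-c')$ with $c':=c-\ell'(\Im(t)/2\pi)$ is already an $n$-cycle in $M_\bR\times N_\bR$ (the multiplicities $m_{I,J}$ of~\eqref{eq:multiplicity} are designed exactly to make boundaries cancel), and that the map $P_t=\sigma_t\circ(\id_M\times\rho)$ is continuous and lands in $\tcY_t$ because the identities $\sum_i l_i^{(a)}b_i=0$ and $\sum_i l_i^{(a)}\ell_{ia'}=\delta_{aa'}$ are precisely the cutting-out equations for $\tcY_t\subset\tM_\bC$. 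Pushforward then gives an $n$-cycle in $\tcY_t$, which descends along the covering map $\tcY_t\to\cY_{e^t}$.

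For the properness statement I would analyze each stratum $P_t(\tau_{I,J}(c')\times(-\sigma_I))$ of the cycle separately. Strata with $I=\emptyset$ are compact (since $\tau_{\emptyset,J}(c')$ is bounded and $-\sigma_\emptyset=\{0\}$), so nothing to check there. For $I\neq\emptyset$, a closer reading of the proof of Lemma~\ref{lemma:iso_from_N_to_M} shows that $\rho(n)\in l(n)\cdot F_I^\vee$ for $n\in -\sigma_I$, where $l(n)$ is a positive, degree-one homogeneous function. Hence $\beta_i(\rho(n))=l(n)$ for every $i\in I$, while $\beta_j(\rho(n))\le l(n)$ for $j\notin I$, with strict inequality on the interior of the cone. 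Conicality together with compactness of the link of $-\sigma_I$ then yields a uniform gap $\beta_j(\rho(n))\le(1-\delta)l(n)$ for some $\delta>0$, so at infinity $|X_i|=\exp(\sum_a\realpart(t_a)\ell_{ia}+l(n))$ dominates $|X_j|$ by a factor of $e^{\delta l(n)}$.

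The phases $\arg X_i$ for $i\in I$ are pinned by $\langle m,b_i\rangle=-c'_i$ combined with the $\Im(t)$-shift built into $\sigma_t$; the deformation $c'=c-\ell'(\Im(t)/2\pi)$ is tailored so that these phases match their values $2\pi c_i$ on the undeformed FLTZ cycle $F_\tcL$. After normalizing representatives so that $\cos(2\pi c_i)>0$, the dominant term satisfies $\realpart X_{i^*}\ge \kappa\,|X_{i^*}|\to+\infty$ on each unbounded stratum (for some $\kappa>0$), and hence $\realpart W^\cX\to+\infty$; combined with continuity on the compact base of each stratum, this produces a uniform lower bound for $|q|<\epsilon$ with $\epsilon$ small enough that $\realpart(t_a)\ll 0$.

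The main obstacle I expect is the phase bookkeeping. The amplitude estimate reduces cleanly to polytope combinatorics via $\rho$ and the face $F_I^\vee$, but verifying that the combinatorial shift $\ell'(\Im(t)/2\pi)$ exactly cancels the $t$-dependent twist coming from $\sigma_t$---so that $\cos(\arg X_i)$ remains positive uniformly on each cone piece $-\sigma_I$ rather than rotating into an unfavorable half-plane---requires tracing all the $2\pi$'s and confirming that $\nu_\bR\circ\ell$ is compatible with the basis dualities implicit in the definition of $\sigma_t$.
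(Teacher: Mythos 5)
Your proof follows the same general architecture as the paper's: decompose $\fP_c(t)$ into strata $P_t(\tau_{I,J}(c')\times(-\sigma_I))$, show that on each unbounded stratum the $i\in I$ terms of $W^\cX$ are real and positive (since the phases $\beta^t_i(m)=2\pi c_i$ with $c_i\in\bZ$ for an honest line bundle — your ``normalize so $\cos(2\pi c_i)>0$'' is unnecessary once you observe this), and that they dominate the remaining terms via Lemma~\ref{lemma:iso_from_N_to_M}. Your amplitude estimate $\beta_j(\rho(n))\le(1-\delta)l(n)$ is correct for $j\in\{1,\dots,r'\}\setminus I$, since the strict inequality defining $U_I$ gives a positive gap on the compact link of $-\sigma_I$.

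There is, however, a genuine gap in the treatment of the auxiliary divisors $j\in\{r'+1,\dots,r\}$. Lemma~\ref{lemma:iso_from_N_to_M} and the cone $U_I$ control only the indices $j\le r'$; for $j>r'$ one has $b_j=\sum_{i\in I_j}s_{ji}b_i$ with $\sum_{i\in I_j}s_{ji}\le 1$ by Assumption~\ref{assump:positive}, and when this sum equals $1$ and $I_j\subset I$ the inequality $\beta_j(\rho(n))\le l(n)$ degenerates to an equality, so no uniform multiplicative gap $(1-\delta)$ exists. Your argument therefore does not rule out the term $e^{\alpha_j^t(n)+\sqrt{-1}\beta_j^t(m)}$ with $j>r'$ growing at the same rate as the dominant $i\in I$ term while carrying an arbitrary phase. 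The paper's proof resolves precisely this by exploiting the decomposition $\tcM=\tcM_\Kah\times\tcM_\orbifold$: one has $\alpha_k^t(n)=\Re((t_\orbifold)_k)+\sum_{i\in I_k}s_{ki}\alpha_i^t(n)$, and by choosing the neighborhood $|q|<\epsilon$ so that $\Re(t_\orbifold)$ is sufficiently negative, one forces an additive gap $\max_{i\le r'}\alpha_i^t(n)>\alpha_k^t(n)+C$ outside a compact set. This step is not decorative — it is the reason the statement requires a small neighborhood $|q|<\epsilon$ at all, and it is the point where the Fano assumption $\sum s_{ki}\le 1$ enters. Your closing sentence ``with $\epsilon$ small enough that $\Re(t_a)\ll 0$'' gestures in this direction, but you never explain how the negativity of $\Re(t_\orbifold)$ specifically rescues the $j>r'$ indices, so the argument as written is incomplete.

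One smaller remark: you assert that the $I=\emptyset$ strata are compact because $\tau_{\emptyset,J}(c')$ is bounded, but that is only true for some $J$ (e.g.\ $J=\{1,\dots,r'\}$); for other $J$ the cell is unbounded and one must invoke the vanishing of the multiplicity $m_{\emptyset,J}$ on unbounded cells, which is a separate combinatorial fact not contained in your argument.
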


\begin{proof}
    We have $P_t(m,n) = \left( \alpha^t_1(n)+\sqrt{-1}\beta^t_1(m), \dots, \alpha^t_r(n)+\sqrt{-1}\beta^t_r(m) \right)$ with 
    \[
        \alpha^t_i(n) = \sum_{a=1}^k \Re(t_a) \ell_{ia} + \langle \rho(n), b_i\rangle, \beta^t_i(m) = \sum_{a=1}^k \Im(t_a)\ell_{ia} + 2\pi\langle m, b_i\rangle.
    \]
    Recall that $$\tS(c) = \sum_{I,J\in [r'], \si_I\in \Si, I \cap J = \emptyset} m_{I,J} (\tau_{I,J}(c) \times (-\sigma_I)).$$
    It is sufficient to show that 
    $$
        P_t\left(\tau_{I,J}\left(\ell'\left(\Im(t)/2\pi\right)-c\right) \times (-\sigma_I)\right)
    $$
    represents an cell in the relative chain group $C_n(\cY_{e^t};\realpart(W^\cX) \gg 0)$ for any $I,J$. For any $i \in I$, we have 
    \[
        \frac{\beta^t_i(m)}{2\pi} = c_i \in \bZ
    \]
    since $m \in \tau_{I,J}(\ell'(\frac{\Im(t)}{2\pi})-c))$. 
    \begin{multline*}
        W(P_t(m,n)) = \sum_{i \in I} e^{\alpha_i^t(n)} + \sum_{j \in \{1,\dots,r'\}\backslash I} e^{\alpha_j^t(n) + \sqrt{-1}\beta_j^t(m)} + \sum_{\{k\in\{r'+1,\dots,r\} \}} e^{\alpha_k^t(n) + \sqrt{-1}\beta_k^t(m)}
    \end{multline*}
    for $(m,n) \in \tau_{I,J}(\ell'(\frac{\Im(t)}{2\pi})-c) \times (-\sigma_I)$. For $k\in\{r'+1,\dots,r\}$, notice that 
    \[
    \alpha_k^t(n) = \Re((t_{\orbifold})_k)(\sum_{i\in I_k} s_{ki}\alpha_i^t(n)),
    \] 
    where $0<s_{ki}<1, \sum_{i\in I_k} s_{ki} \leq 1$ due to Assumption \ref{assump:positive} and we write $t = (t_{\Kah},t_\orbifold)$ according to the decomposition $\tcM=\tcM_\Kah \times \tcM_\orbifold$. We choose a neighbourhood of $|q|<\epsilon$ in $\cM^\circ$ and some $C>0$ such that for any $e^t$ in the neighbourhood, $\Re (t_\orbifold)$ is sufficiently negative so that
    \[
    \max_{i\in [r']}\{\alpha_i^t(n)\} > \alpha_k^t(n) + C
    \]
    for $n\in N_\bR$ outside of a compact subset. Furthermore, there exists a compact subset $K\subset N_\bR$ such that 
    \begin{align*}
        \max_{i\in I}\{\alpha_i^t(n)\} >& \alpha_j^t(n) + C, \forall j\in\{1,\dots,r'\}\backslash I,\\
        \max_{i\in I}\{\alpha_i^t(n)\} >& \alpha_s^t(n) + C, \forall s\in\{r'+1,\dots,r\}
    \end{align*}
    for any $n \in -\sigma_I\backslash K$ due to Lemma \ref{lemma:iso_from_N_to_M}. This implies $\fP_c(t)$ represents an element in $H_n(\cY_{e^t};\realpart(W^\cX) \gg 0)$.
\end{proof}

When $t$ is real, $\fP_c(t)$ is the SYZ mirror cycle $\si_t\circ(\id_{M_\bR}\times \rho)(\mathrm{SYZ}(\cL^\sharp))$ up to a rational shift in $M_\bR$ and the identification $\rho$ between $M_\bR$ and $N_\bR$.

\begin{lemma}
    The cycles $\fP_c(t)$ in Proposition \ref{prop:cycles_are_good} represent flat sections of the local system $\widetilde{\cR}^\vee$ on $\widetilde{U}_\epsilon = \{t\mid |e^t| < \epsilon\}$.
\end{lemma}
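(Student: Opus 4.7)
The plan is to reduce flatness of $[\fP_c(t)]$ as a section of $\tcR^\vee$ to mere \emph{continuous} dependence on $t$: the stalks of the local system $\tcR^\vee$ are discrete (as for any local system once locally trivialized), so any continuous section is automatically locally constant, hence flat. It thus suffices to exhibit $t \mapsto \fP_c(t)$ as a continuous family of relative $n$-chains for $(\cY_{e^t}; \realpart(W^\cX) \gg 0)$.

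First I would unpack the definition $\fP_c(t) = P_t(\tS(\ell'(\Im(t)/2\pi) - c))$ and check continuous dependence piece by piece. The shift $\ell'(\Im(t)/2\pi)$ is a fixed $\bR$-linear function of $t$. The cycle $\tS(c')$ is given explicitly by $\sum_{I,J} m_{I,J}\,\tau_{I,J}(c') \times (-\sigma_I)$: the multiplicities $m_{I,J}$ are independent of $c'$, while the cells $\tau_{I,J}(c')$ are cut out by the affinely-translating hyperplanes $H_i(c')$ and hence vary continuously with $c'$. Finally, $P_t = \sigma_t \circ (\id_{M_\bR} \times \rho)$ depends smoothly on $t$ through the affine-in-$t$ formula for $\sigma_t$. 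Composing these gives a continuous family of singular chains in the total space of $\tcY \to \tcM$.

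To upgrade this to a continuous section of the relative-homology local system, the estimate from the proof of Proposition~\ref{prop:cycles_are_good} must be uniform on compact subsets $K \subset \widetilde{U}_\epsilon$. Concretely, for each cone $\sigma_I$ appearing in $\tS$, the dominance $\max_{i \in I}\alpha^t_i(n) > \alpha^t_j(n) + C$ for $n$ outside a compact subset of $-\sigma_I$ must hold with a single compact set and constant $C$ simultaneously for all $t \in K$ and all cells. This follows from exactly the same bookkeeping as in Proposition~\ref{prop:cycles_are_good}: the relevant data ($\ell_{ia}$, $\rho$, $\{s_{ki}\}$, and the finite list of cones $\sigma_I$) are $t$-independent, while $\realpart(t_\orbifold)$ and $\ell'(\Im(t)/2\pi)$ are bounded on $K$. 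Assumption~\ref{assump:positive} again enters through the key inequality $\sum_{i \in I_k} s_{ki} \le 1$, ensuring that the $i \in \{r'+1,\dots,r\}$ contributions to $\realpart(W^\cX \circ P_t)$ are dominated by those from $I$ uniformly in $t\in K$.

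Once this uniformity is in hand, the assignment $t \mapsto [\fP_c(t)]$ is a continuous section of the local system $\tcR^\vee$ over $\widetilde{U}_\epsilon$, and hence flat. The main obstacle is the careful formulation of the uniform control at infinity; however, the necessary inequalities are already essentially present in the proof of Proposition~\ref{prop:cycles_are_good}, and the new content is just the compactness-in-$t$ bookkeeping rather than any additional analytic input.
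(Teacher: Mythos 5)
Your proposal takes essentially the same route as the paper: reduce flatness to local constancy of the homology class, to be established via continuity of the family of chains plus uniform-in-$t$ control of the cycles at infinity (where, as you note, Assumption~\ref{assump:positive} keeps the $i>r'$ contributions dominated on compacts in $t$). The one step you leave implicit and the paper makes explicit is exhibiting, over a short path $p$ from $t$ to $t'$ in $B_\delta(t)$, the bounding $(n+1)$-chain $\widetilde{\fP} = \bigl(\sigma_{p(0)}\circ(\id_M\times\rho)\bigr)_*\bigl(\bigcup_{s\in[0,1]}\tS(a(s))\bigr)$ --- using that $\sigma_{p(s),p(0)}\circ P_{p(s)}=\sigma_{p(0)}\circ(\id_M\times\rho)$ is $s$-independent --- which the uniform estimates place in $C_{n+1}(\cY_{e^t};\Re(W^\cX)\gg 0)$ with $\partial\widetilde{\fP}=\sigma_{t',t}(\fP_c(t'))-\fP_c(t)$; this is the concrete homology witnessing the continuity of $t\mapsto[\fP_c(t)]$ as a section of $\tcR^\vee$.
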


\begin{proof}
    Let $\sigma_{t_1,t_2}:= (\sigma_{t_2} \circ \sigma_{t_1}^{-1}:\tcY_{t_1}\rightarrow \tcY_{t_2})$ be the isomorphism from $\tcY_{t_1} \cong \cY_{e^{t_1}}$ to $\tcY_{t_2}\cong \cY_{e^{t_2}}$. It is sufficient to show that given $t,t'\in\widetilde{U}_\epsilon$, if $|t-t'|$ is small enough, then $\sigma_{t',t}(\fP_c(t'))$ and $\fP_c(t)$ represent the same element in $H_n(\cY_{e^t};\realpart(W^\cX) \gg 0)$. 
    
    From the proof of Proposition \ref{prop:cycles_are_good}, there exists some $\delta > 0$ such that whenever $t'\in B_\delta(t) =\{t''\mid |t''-t|<\delta \}\subset \widetilde{U}_\epsilon$, $\sigma_{t',t}(\fP_c(t'))$ represents an element in $H_n(\cY_{e^t};\realpart(W^\cX) \gg 0)$. Now let us choose a continuous path $p:[0,1]\rightarrow B_\delta(t)$ with $ p(0) = t, p(1) = t'$. For each $s\in[0,1]$, write $a(s) = \ell'\left(\Im(p(s))/2\pi\right)-c$ in Definition \ref{def:SYZ_cycle} so that $\fP_c(p(s)) = P_{p(s)}(\tS(a(s)))$.
    We can regard 
   \[
        \widetilde{S}:= \bigcup_{s\in[0,1]}\tS(a(s))
   \]
   as an $(n+1)$-chain in $M_\bR \times N_\bR$. Recall that $P_t = \sigma_t\circ(\id_M \times \rho):M_\bR\times N_\bR\rightarrow \cY_{e^t}$, so $\sigma_{p(s),p(0)}\circ P_{p(s)} = \sigma_{p(0)}\circ(\id_M \times \rho)$, which is independent of $s$. Let us consider the $(n+1)$-chain 
   \[
        \widetilde{\fP} = \big(\sigma_{p(0)}\circ(\id_M \times \rho)\big)_*(\widetilde{S})
   \]
    in $\cY_{e^t}$. Then $\widetilde{\fP}$ is an element in $C_{n+1}(\cY_{e^{t}};\realpart(W^\cX) \gg 0)$ and $\partial \widetilde{\fP} = \sigma_{t',t}(\fP_c(t')) - \widetilde{\fP}_c(t)$. So $\sigma_{t',t}(\fP_c(t'))$ and $\fP_c(t)$ represent the same element in $H_n(\cY_{e^t};\realpart(W^\cX) \gg 0)$ as wanted.
\end{proof}

\begin{theorem}
    Suppose $\cL^\sharp=\cO(\sum_{i=1}^{r'} c_i\cD_i)\in \Pic_\bT(\cX)$. It descends to a line bundle $\cL\in \Pic(\cX)$. For any $t \in \tcM$ such that $|e^t|$ is sufficiently small, we have
    \begin{equation} \label{eq:centralChagesMirror2}
        Z_A^{\cX}(\cL)(t)= (2\pi\sqrt{-1})^{-n}Z^{\cX}_B(\fP_c(t)) .
    \end{equation}
    \label{thm:main-2}
\end{theorem}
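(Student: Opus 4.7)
The plan is to combine Theorem~\ref{thm:main-1} with the flat section property of $\{\fP_c(t)\}$ just established, reducing the desired equality to a homological matching of two specific cycles at a single well-chosen point.

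Since $Z_B^\cX(F)$ depends only on the relative homology class $[F]\in H_n(\cY_{e^t};\Re(W^\cX)\gg 0)$, and since $t\mapsto\fP_c(t)$ is a flat section of $\widetilde{\cR}^\vee$ on $\widetilde U_\epsilon$, the function $t\mapsto Z_B^\cX(\fP_c(t))$ is analytic on $\widetilde U_\epsilon$. The A-model central charge $Z_A^\cX(\cL)(t)$ is analytic on all of $\widetilde{\cM}$. Hence it suffices to verify \eqref{eq:centralChagesMirror2} on an open subset of $\widetilde U_\epsilon$ and extend by analytic continuation. I would work in the overlap $V:=\widetilde U_\epsilon\cap\{t:\eqref{eq:tStrip}\text{ holds}\}$. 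For $t\in V$, choosing $c'_i$ with $\sum_i l_i^{(a)} c'_i=\Im(t_a)/(2\pi)$, Theorem~\ref{thm:main-1} gives
\[
(2\pi\sqrt{-1})^n Z_A^\cX(\cL)(t)=Z_B^\cX\Big(F_{\tcL'}\cap\bigcap_a\{\Re\tH_a=\Re(t_a)\}\Big).
\]
So the claim reduces to the homological identity
\[
[\fP_c(t)]=\Big[F_{\tcL'}\cap\bigcap_a\{\Re\tH_a=\Re(t_a)\}\Big]\quad\text{in }H_n(\cY_{e^t};\Re(W^\cX)\gg 0)
\]
for some $t\in V$; since both families are flat sections of $\widetilde{\cR}^\vee$, matching at one point propagates across $V$.

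For the matching I would choose $t_0\in V$ with $\Im(t_{0,a})=2\pi h_a$ exactly (so $c'=c$ and $F_{\tcL'}=F_\tcL$) and $\Re(t_{0,a})\ll 0$. The SYZ cycle $\fP_c(t_0)$ is the signed sum of pieces $P_{t_0}(\tau_{I,J}(\ell'(h)-c)\times(-\sigma_I))$ attached to the combinatorics of $\Si$ and the hyperplane arrangement, while the cycle $F_\tcL\cap\bigcap_a\{\Re\tH_a=\Re(t_{0,a})\}$ can be partitioned using a smooth partition of unity on $M_\bR$ subordinate to the maximal cells $\{U_I\}$ of Lemma~\ref{lemma:iso_from_N_to_M}. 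An explicit homotopy is then built by interpolating, on each piece, the $N_\bR$-parametrization given by the homeomorphism $\rho$ of Lemma~\ref{lemma:iso_from_N_to_M} with the straight Euclidean parametrization underlying $F_\tcL$, exploiting the compatibility $\rho(-\sigma_I)\subset U_I$ to align cones with cells throughout the deformation.

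The hard part will be verifying admissibility of this homotopy: that $\Re(W^\cX)\to+\infty$ uniformly at the non-compact ends for every intermediate cycle. On the piece indexed by $I$, the dominant terms at infinity are $\sum_{i\in I}e^{\Re x_i}\cos(2\pi c_i)$; the alignment of cones with cells $U_I$ ensures that for each such piece the surviving angles are precisely the $c_i$ from $F_\tcL$, while the remaining $i\notin I$ give subdominant contributions. The Fano condition (Assumption~\ref{assump:positive}) together with $\Re(t_0)\ll 0$ handles the terms coming from the extra rays $b_{r'+1},\ldots,b_r$. Once admissibility is granted, the equality at $t_0$ extends to $V$ by flatness and then to $\widetilde U_\epsilon$ by analytic continuation, concluding the proof.
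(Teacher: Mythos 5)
Your overall strategy mirrors the paper's: verify the identity at a ``compatible'' point $t_0$ (where $\Im(t_{0,a})=2\pi h_a$), invoke Theorem~\ref{thm:main-1} there, and extend by flatness and analytic continuation. The analytic-continuation step and the flat-section bookkeeping in your argument are sound. The difference is in how you handle the cycle matching at $t_0$, and there your proposal both misses the key simplification and leaves the decisive step unfinished.

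The paper's proof relies on an observation you do not make: at a compatible $t_0$ the shift vector $\ell'(\Im(t_0)/2\pi)-c$ lies in $\phi^\vee(M_\bQ)$, so the hyperplanes $H_i(\ell'(\Im(t_0)/2\pi)-c)$ all pass through a common point $m_0\in M_\bR$. The arrangement is therefore \emph{central}, and in the formula $\tS=\sum m_{I,J}\,\tau_{I,J}\times(-\sigma_I)$ every cell with $|I|<n$ is killed by the dimension condition while the cells with $|I|=n$ all degenerate to $\{m_0\}$ with multiplicity $1$. Since $\sum_{\si\in\Si(n)}(-\si)=N_\bR$, one gets $\tS(\ell'(\Im(t_0)/2\pi)-c)=\{m_0\}\times N_\bR$, and a direct coordinate check (using $\alpha_i^{t_0}$, $\beta_i^{t_0}$ from the proof of Proposition~\ref{prop:cycles_are_good}) shows $P_{t_0}(\{m_0\}\times N_\bR)=F_{\tcL}\cap\bigcap_a\{\Re\tH_a=\Re(t_{0,a})\}$ \emph{as sets}. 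So no homotopy is needed at $t_0$: the two cycles coincide on the nose. Your proposal instead treats $\fP_c(t_0)$ as a genuinely multi-piece complex and sketches an interpolation between the $\rho$-parametrization and the Euclidean one, with an admissibility check that you explicitly defer (``the hard part will be verifying admissibility of this homotopy''). That deferred step is the entire content of the reduction, and without it the argument is not complete. Since the cycles coincide at $t_0$ once the central-arrangement collapse is noticed, your homotopy would be the identity, and the whole admissibility discussion evaporates; but as written, your proposal neither sees the collapse nor supplies the homotopy it says is needed.
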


\begin{proof}
    We construct a rational Picard group element $\tcL=\cO(\sum_{i=1}^r c_i \cD_i)=\iota_\bT(\cL^\sharp)\in \Pic_{\tbT}(\tM_\bC)$. From the definition when $\Im (t_a)=2\pi h_a, \; a\in 1,\dots,k$, i.e., when $t$ is chosen compatible with $\tcL$, $\fP_c(t) = F_\tcL\cap  \bigcap_{a=1}^k \{\Re(\tH_a)=\Re(t_a)\}$.
    
    Then, the formula~\eqref{eq:centralChagesMirror2} follows from the Theorem \ref{thm:main-1} for $t$ compatible with $\tcL$. 
    Thus, both sides of the equation~\eqref{eq:centralChagesMirror2} are analytic functions of $t$ coinciding on a real subspace $\Im(t_a) = 2\pi h_a, a\in 1,\dots,k$,
    that are defined for all $\Im(t)$, so they are equal. 
\end{proof}

\appendix

\section{Convergence of inverse Fourier transform}

We recall some basic facts about Fourier transforms of analytic functions and how they apply to this paper.
The following theorem is a standard fact in the theory of Fourier transforms.

\begin{theorem}[Fourier inversion theorem for analytic functions] \label{thm:fourierInversion}
Let $f(x) \; : \; \bC \to \bC$ be a meromorphic function which is holomorphic in the strip $\bR \times \sqrt{-1}[a,b]$ and satisfying
    \begin{equation}
        f(x) = 
        \begin{cases}
            O(e^{-(\beta-\epsilon)\Re(x)}, \quad \Re(x) \to \infty, \\
            O(e^{-(\alpha-\epsilon)\Re(x)}, \quad \Re(x) \to -\infty,
        \end{cases}
    \end{equation}
    for $a<b$ and $\beta > \alpha$
    uniformly in $\Im(x) \in [a,b]$. Then, its Fourier transform is defined as 
    \[
        g(u) := \int_{\bR+\sqrt{-1}c}e^{ux}f(x)dx, \quad c \in [a,b].
    \]
    It is an analytic holomorphic function in a strip $u \in [\alpha,\beta]\times \sqrt{-1}\bR$ such that 
    \begin{equation}
        g(u) = 
        \begin{cases}
            O(e^{-(b-\epsilon)\Im(u)}, \quad \Im(u) \to \infty, \\
            O(e^{-(a-\epsilon)\Im(u)}, \quad \Im(u) \to -\infty,
        \end{cases}
    \end{equation}
    uniformly in $\Re(u) \in [\alpha,\beta]$.
    Moreover,
    the inverse Fourier transform of $g(u)$ is 
    \[
        f(t) = \InvM(g)(t) = (2\pi\sqrt{-1})^{-1}\int_{\gamma+\sqrt{-1}\bR} e^{-ut} g(u)du, \quad \gamma \in [\alpha,\beta].
    \] 
\end{theorem}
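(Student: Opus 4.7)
The plan is to establish the four distinct assertions in the theorem in the following order: (i) existence of $g(u)$ on the vertical strip $\Re(u)\in(\alpha,\beta)$, (ii) independence of $g$ on the horizontal contour $\bR+\sqrt{-1}c$, (iii) holomorphy and the claimed decay of $g$, and finally (iv) the inversion formula. All four reduce to careful application of Cauchy's theorem together with the hypothesized exponential bounds on $f$.

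For step (i), parametrize the contour by $x=s+\sqrt{-1}c$ with $s\in\bR$. The hypothesis gives $|f(x)|\le C_c e^{-(\beta-\epsilon)s}$ for large positive $s$ and $|f(x)|\le C_c e^{-(\alpha-\epsilon)s}$ for large negative $s$. Since $|e^{ux}|=e^{\Re(u)s}\cdot e^{-c\,\Im(u)}$, the integrand is absolutely integrable precisely when $\alpha-\epsilon<\Re(u)<\beta-\epsilon$, and letting $\epsilon\to 0$ covers the open strip. For step (ii), apply Cauchy's theorem on the rectangle $[-R,R]\times\sqrt{-1}[c_1,c_2]$ with $a\le c_1<c_2\le b$: the two vertical sides contribute terms bounded by the maximum of $|f|$ on $\{\Re(x)=\pm R\}$ times the strip width times $e^{\Re(u)(\pm R)}$, which vanish as $R\to\infty$ by the exponential decay, proving the two horizontal pieces are equal.

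For step (iii), holomorphy on the open strip follows from Morera/differentiation under the integral, justified by the uniform absolute integrability of $\partial_u[e^{ux}f(x)]=x\,e^{ux}f(x)$. The decay estimates for $g$ are then proved by exploiting the freedom in (ii): to control $g(u)$ as $\Im(u)\to+\infty$, choose the contour at $\Im(x)=b-\epsilon'$, yielding
\[
|g(u)|\;\le\; e^{-(b-\epsilon')\Im(u)}\int_\bR |f(s+\sqrt{-1}(b-\epsilon'))|\,e^{\Re(u)s}\,ds,
\]
and the remaining integral is uniformly bounded for $\Re(u)$ in any compact subset of $(\alpha,\beta)$. Symmetrically, pushing the contour to $\Im(x)=a+\epsilon'$ gives the bound as $\Im(u)\to-\infty$.

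For step (iv), the inversion formula, substitute the definition of $g$ and attempt to interchange integrals:
\[
\frac{1}{2\pi\sqrt{-1}}\int_{\gamma+\sqrt{-1}\bR}\!\!\!\!e^{-ut}g(u)\,du \;=\; \frac{1}{2\pi\sqrt{-1}}\int_{\gamma+\sqrt{-1}\bR}e^{-ut}\left(\int_{\bR+\sqrt{-1}c}e^{ux}f(x)\,dx\right)du.
\]
The main obstacle is that after a naive Fubini the inner integral $\int_{\gamma+\sqrt{-1}\bR}e^{u(x-t)}du$ fails to converge absolutely; this is the standard subtlety of Fourier inversion. I would resolve it by inserting a Gaussian regulator $e^{-\eta u^2}$, applying Fubini (now legitimate by the estimates in (i) and (iii)), explicitly evaluating the inner Gaussian integral, and sending $\eta\to 0^{+}$ by dominated convergence using the uniform decay of $g$ from (iii). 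Equivalently, one may reduce to the classical $L^1$ Fourier inversion on the real line by first choosing $\Im(t)=c$ and writing both sides as transforms of the Schwartz-class-like functions $s\mapsto f(s+\sqrt{-1}c)$ and $\sigma\mapsto g(\gamma+\sqrt{-1}\sigma)$; the decay assertions of (i) and (iii) ensure both lie in $L^1\cap L^2$, so the standard inversion theorem applies. Finally, analyticity of both sides in $t$ and $\Im(c)$ propagates the identity to all permissible $t$.
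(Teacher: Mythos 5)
Your proposal is correct, but there is nothing in the paper to compare it against: the appendix states Theorem~\ref{thm:fourierInversion} as ``a standard fact in the theory of Fourier transforms'' and gives no proof at all, only the illustrative example $f(x)=e^{-e^x}$ with $g(u)=\Gamma(u)$. Your four-step argument --- absolute convergence of $g$ on the vertical strip from the two-sided exponential bounds, contour independence by Cauchy's theorem on rectangles with vanishing vertical sides, holomorphy plus the decay $O(e^{-(b-\epsilon)\Im(u)})$ by shifting the horizontal contour toward $\Im(x)=b$ (resp.\ $a$), and inversion by reducing to classical $L^1$ Fourier inversion for the function $s\mapsto e^{\gamma s}f(s+\sqrt{-1}c)$, whose transform is $\sigma\mapsto g(\gamma+\sqrt{-1}\sigma)$ up to a phase and which lies in $L^1$ together with its transform by steps (i) and (iii) --- is exactly the standard proof this citation points to, and correctly isolates the one genuine subtlety (the non-absolutely-convergent double integral after a naive Fubini), resolving it by regularization or by the $L^1\cap L^1$ inversion theorem. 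One small caveat worth flagging: as literally stated, the hypotheses $f=O(e^{-(\beta-\epsilon)\Re(x)})$ and $f=O(e^{-(\alpha-\epsilon)\Re(x)})$ only yield absolute convergence of $g$ on the open strip $\alpha-\epsilon<\Re(u)<\beta-\epsilon$, so holomorphy on the closed strip $[\alpha,\beta]\times\sqrt{-1}\bR$ at the endpoint $\Re(u)=\beta$ does not quite follow; your step (i) honestly records this, and the discrepancy is an imprecision in the theorem's statement (compare the example, where $\beta=\infty$) rather than a gap in your argument.
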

For example, consider the function $f(x) = e^{-e^x}$. This function satisfies the conditions of the theorem with $b=\pi/2-\delta, \; a = -b, \; \alpha = 0, \beta =
\infty$. The Fourier transform of $f(x)$ is the Gamma function $\Gamma(u)$ whose inverse Fourier transform is again $f(x)$ by the residue theorem.

\paragraph{\bf Convergence of partial inverse Fourier transforms}

We also prove the following convergence lemma needed to generalize the argument of~\cite{Aleshkin_Liu_23}.
First, we recall the definitions of an equivariant deformation and cylinder-like integration contours.
Consider an ``equivariant deformation'' $\alpha = (\alpha_1, \ldots, \alpha_r) \in \bR^r$ and define the hyperplanes 
$\pi^i_{m_i} \subset \fg$ by the equation
\[
    \pi^i_{m_i} = \{\sum_a u_a l^{(a)}_i + \alpha_a = -m_i\}, \quad m_i \in \bZ_{\ge 0}.
\]
Gamma function $\Gamma(\sum_a u_a l^{(a)}_i +\alpha_i)$ has a pole at $\pi^i_{m_i}$ for each $m_i \ge 0$.
Let $I \subset [1..r]$ be an anticone
and $\delta \in \bigcap_{i\in I} \pi^i_{m_i}$, where $m_i \in \bZ_{\ge 0}, \; i \in I$.

Cylinder-like integration contours are crucial in the argument of~\cite{Aleshkin_Liu_23}. They appear in the process of contour deformations when the contour intersects polar hyperplanes of gamma functions. We need to have an estimate for integrals over such
contours in order to control the convergence of the central charges.

We can define the cylinder contour $\cC(\delta) \subset \mathrm{Lie}(G)$ centered at $\delta$ as follows.
Topologically, $\cC^I(\delta)$ is the product of $|I|$ circles and $k-|I|$ real lines. Geometrically,
let $\fg_I \subset \fg$ be the Lie subalgebra defined by the equations $\fg_I = \bigcap_{i\in I}\{\sum_a u_a l^{(a)}=0\}$ and
$\mathfrak{h}_I = \fg/\fg_I$. The hyperplanes $\pi^i_{m_i}$ project to hyperplanes $\overline{\pi}^i_{m_i} \subset \mathfrak{h}_I$ and
the divisor $\bigcup_{i\in I} \overline{\pi}^i_{m_I}$ is a simple normal crossing divisor with the intersection $\overline{\delta}$ 
which is the image of $\delta$ under the projection map. We let $\overline{\cC}(\overline{\delta})\simeq (S^1)^{|I|}$ to be a small torus in 
$\mathfrak{h}_I$ encircling the normal crossing divisor. Consider $\cC(\delta)$ to be a lift of $\overline{\cC}(\overline{\delta})$
to $\fg$ contained in a small ball around $\delta$. Finally, we define $\cC(\delta)\simeq(S^1)^{|I|}\times\bR^{k-|I|}$ to be $\cC(\delta)'+\sqrt{-1}(\fg_I)_\bR$. We will not discuss the choice of an orientation here as it is not relevant to the convergence 
discussion. 

\begin{lemma} \label{lem:convergence}
    Let $\delta$ be in the effective cone, and $\cC_{\delta}$ be as above. Then, there exist $C_1, \; C_2 > 0$ and $\alpha$ small enough such that we have the following estimate:
    \begin{equation} \label{eq:intEstimate1}
        I := \int_{\cC(\delta)}\left|\prod_{a=1}^k du_a e^{\sum_a t_a u_a} \prod_{i=1}^r \Gamma\left(\sum_a u_a l^{(a)}+\alpha_i\right)\right|
        < C_1 e^{(\sum_i \delta_i \ln|\delta|)+\sum_a t_a\delta_a +C_2|\delta|},
    \end{equation}
    where $|\cdot|$ is a chosen norm on $\fg$.
\end{lemma}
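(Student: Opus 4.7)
The plan is to decompose the cylinder $\cC(\delta) \simeq (S^1)^{|I|} \times \bR^{k-|I|}$ into its compact torus part $\cC'$ and its non-compact imaginary part $\sqrt{-1}(\fg_I)_\bR$, and to estimate each factor separately using residue-style bounds on the torus and Stirling's approximation on the unbounded directions. A convenient set of coordinates on $\mathfrak{h}_I = \fg/\fg_I$ is $\xi_i := \sum_a u_a l^{(a)}_i + \alpha_i$ for $i \in I$; by construction these form a linear isomorphism $\mathfrak{h}_I \xrightarrow{\sim} \bC^{|I|}$ sending $\delta$ to $(-m_i)_{i \in I}$. In these coordinates $\cC'$ is a product of small circles $|\xi_i+m_i|=\rho_i$, each encircling a simple pole of $\Gamma(\xi_i)$.

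First I would bound the torus integration. On each circle we have $|\Gamma(\xi_i)| \leq C/(m_i!\rho_i)$, which combined with the circumference $2\pi\rho_i$ and a fixed Jacobian from the coordinate change contributes a factor bounded by $\prod_{i \in I}(2\pi C)/m_i! \leq C_1$, to be absorbed into the prefactor. For $i \notin I$ the argument $\sum_a u_a l^{(a)}_i + \alpha_i$ stays within $O(\rho)$ of the bounded real number $D_i(\delta)+\alpha_i$ as $u'$ ranges over $\cC'$, so the corresponding gamma factors are bounded uniformly on $\cC'$ and fold into the next step.

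Next I would estimate the non-compact direction by writing $u = u' + \sqrt{-1}y$ with $u' \in \cC'$ and $y \in (\fg_I)_\bR$. Since $y \in \fg_I$, for $i \in I$ the argument of $\Gamma$ is independent of $y$, while for $i \notin I$ it equals $\beta_i + \sqrt{-1} D_i(y)$ with $\beta_i = D_i(\delta)+\alpha_i+O(\rho)$. Stirling's asymptotic
\[
|\Gamma(\beta_i + \sqrt{-1}D_i(y))| \leq C(1+|D_i(y)|)^{\beta_i - 1/2}\, e^{-\frac{\pi}{2}|D_i(y)|}
\]
is uniform for $\beta_i$ in a compact set, and for $\alpha$ small it holds with $\beta_i$ close to $D_i(\delta)$. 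Plugging in reduces the problem to bounding
\[
e^{\sum_a t_a \delta_a} \int_{(\fg_I)_\bR} e^{-\sum_a \mathrm{Im}(t_a) y_a}\prod_{i \notin I}(1+|D_i(y)|)^{\beta_i-1/2}\, e^{-\frac{\pi}{2}|D_i(y)|}\,dy.
\]
Because $I$ is an anticone, the linear forms $\{D_i\}_{i \notin I}$ span $\mathfrak{g}_I^\vee$, so $\sum_{i \notin I}|D_i(y)|$ is comparable to $|y|$; this gives genuine exponential decay in $y$, and the grade restriction rule ensures the $\mathrm{Im}(t)$-linear term is dominated. A multivariate version of the one-dimensional bound $\int_\bR(1+|s|)^N e^{-\frac{\pi}{2}|s|}ds \leq C\Gamma(N+1)(2/\pi)^{N+1}$, carried out in coordinates along the $\{D_i\}_{i \notin I}$, produces a factor $\prod_{i \notin I}\Gamma(\beta_i+1/2)$. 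Applying Stirling once more to these factorials and using $\beta_i = D_i(\delta)+O(1)$ together with $D_i(\delta) = O(|\delta|)$ yields the exponent $\sum_i \delta_i \ln|\delta| + C_2|\delta|$, which combined with the $e^{\sum_a t_a \delta_a}$ prefactor gives \eqref{eq:intEstimate1}.

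The main obstacle will be bookkeeping: one must verify that the effective-cone hypothesis on $\delta$ guarantees both that $\beta_i \geq 0$ (so Stirling applies in the claimed form) and that the span of $\{D_i\}_{i \notin I}$ covers $\mathfrak{g}_I^\vee$ with a uniform lower bound on $\sum_{i\notin I}|D_i(y)|/|y|$, and then that the logs of the gamma factors assemble precisely into $\sum_i \delta_i \ln|\delta|$ with error $O(|\delta|)$ absorbed into $C_2$. This is essentially the multivariate Stirling estimate used in the analogous proposition A.5 of \cite{Aleshkin_Liu_23}, and the Fano hypothesis on $\cX$ (ensuring the relevant nefness of combinations of $D_i$) makes the matching even cleaner than in the Calabi-Yau case treated there.
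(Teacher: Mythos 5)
Your overall strategy (split the $i \in I$ factors, which live near poles, from the $i \notin I$ factors, which grow in the imaginary directions) is a reasonable refinement of the paper's argument, but there is a genuine error at the very first step that makes the final bound fail: you write "$\prod_{i\in I}(2\pi C)/m_i! \le C_1$, to be absorbed into the prefactor," thereby throwing away the factor $\prod_{i \in I}1/m_i!$. This factor cannot be discarded. By Stirling, $1/m_i! \approx e^{-m_i\ln m_i + m_i}$, and since $\delta_i \approx -m_i$ for $i \in I$, this is precisely the contribution $e^{\sum_{i\in I}\delta_i\ln|\delta_i| + O(|\delta|)}$ that supplies the negative part of the target exponent $\sum_i\delta_i\ln|\delta|$. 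Once you drop it, your remaining bound (after the Stirling estimate on $\prod_{i\notin I}\Gamma(\beta_i+\tfrac12)$) is of order $e^{\sum_{i\notin I}\delta_i\ln\delta_i + O(|\delta|)}$, and the discrepancy with the required $e^{\sum_i\delta_i\ln|\delta| + C_2|\delta|}$ is roughly $e^{\sum_{i\in I}m_i\ln|\delta|} \sim e^{|\delta|\ln|\delta|}$, which is not absorbable into $C_1$ or $C_2|\delta|$. The paper avoids this by applying the Stirling-type estimate (Lemma A.1 of Aleshkin--Liu) uniformly to all $r$ gamma factors, producing $\prod_i e^{x_i\ln|z_i|}$; for $i \in I$ this term is $\approx e^{-m_i\ln m_i}$ and keeps exactly the decay you gave up.

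A secondary issue is the step where you claim the integral over $(\fg_I)_\bR$ factors into $\prod_{i\notin I}\Gamma(\beta_i+\tfrac12)$ "in coordinates along the $\{D_i\}_{i\notin I}$": these linear forms span $\fg_I^\vee$ but are not a basis (there are $r-|I|$ of them versus $\dim\fg_I = k-|I|$), so there is no direct change of variables, and the mixed powers $\prod_{i\notin I}(1+|D_i(y)|)^{\beta_i - 1/2}$ do not separate. One would need a genuinely multivariate estimate here; the paper instead collapses everything to a single radial bound $e^{(\sum_i x_i)\ln|x|}e^{-C_5|y|}e^{C_3|x|}$ and finishes with polar coordinates, which is much less bookkeeping. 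If you want to pursue your decomposition, the fix is: retain $\prod_{i\in I}e^{\delta_i\ln|\delta_i|}$ from the residue bound, then show that this, times the Stirling bound for the $i\notin I$ integral, is controlled by $e^{\sum_i\delta_i\ln|\delta| + C_2|\delta|}$ using that $\delta$ lies in the effective cone (so $\sum_i \delta_i < 0$) and $|\delta_i| \le |\delta|$.
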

\begin{proof}
 Let $z_i = x_i + \sqrt{-1}y_i= \sum_a u_a l^{(a)}+\alpha_i$. We use lemma A.1 from~\cite{Aleshkin_Liu_23} to estimate the integrand:
 \begin{multline}
     |\prod_{i=1}^r \Gamma(z_i)| < C_1\prod_i e^{x_i \ln|z_i|}e^{-\frac{\pi}{2}|y_i|}e^{C_2 |x|} = C_1\prod_i e^{x_i \ln|z|}e^{x_i \ln\frac{|z_i|}{|z|}}e^{-\frac{\pi}{2}|y_i|}e^{C_2 |x|} < \\ < C_4\prod_i e^{x_i \ln|z|}e^{-\frac{\pi}{2}|y_i|}e^{C_3 |x|} <
     C_4 e^{(\sum_i x_i)\ln |z|} e^{-C_5|y|}e^{C_3|x|},
 \end{multline}
 where the second inequality is due to the fact that $\ln|z_i|/|z|$ is bounded from above, and $x_i\ln|z_i|/|z|$ is bounded for small $z_i$. Moreover, $\sum_i x_i < 0$ for $\alpha$ in an open set including $\alpha=0$, because $\delta$ is in the effective cone. We then use the estimate $\ln |x| \le \ln |z|$ to write
 \[
    e^{(\sum_i x_i)\ln|z|} \le e^{(\sum_i x_i) \ln |x|}.
 \]
 Finally, we can estimate the integral~\eqref{eq:intEstimate1}:
 \begin{equation}
     I < C_6\int_{\delta + \sqrt{-1}(\fg_I)_{\bR}} e^{\sum_a t_a \delta_a}e^{(\sum_i x_i) \ln |x|}e^{-C_5|y|}e^{C_3|x|} \; d^{k-|I|} y,
 \end{equation}
 where we used that the integral over a compact torus is bounded by the maximum of the integrand times the volume of the torus,
 and $d^{k-|I|}y$ is a shift invariant volume form. We can compute the integral using polar coordinates with the result:
 \[
    I < C_7 e^{(\sum_i x_i) \ln |x|}e^{\sum_a t_a\delta_a + C_3|x|}.
 \]
 The estimate of the theorem follows after relabeling of the constants.
\end{proof}

\bibliographystyle{alphaurl}
\bibliography{Bibliography}
\end{document}